\theoremstyle:=definition,remark,plain\do{%
        \expandafter\g@addto@macro\csname th@\theoremstyle\endcsname{%
            \addtolength\thm@preskip\parskip
            }%
        }
\newtheorem{theorem}{Theorem}
\newtheorem*{theorem*}{Theorem}
\newtheorem{lemma}[theorem]{Lemma}
\newtheorem{prop}[theorem]{Proposition}
\newtheorem{corollary}[theorem]{Corollary}
\newtheorem*{question*}{Question}
\theoremstyle{definition}
\newtheorem{definition}[theorem]{Definition}
\newtheorem{remark}[theorem]{Remark}
\newtheorem*{remark*}{Remark}
\newtheorem{example}[theorem]{Example}
\newtheorem{problem}[theorem]{Problem}
\newcommand{\cal}{\mathcal}
\newcommand{\conv}[1]{\mathrm{conv}( {#1})}
\newcommand{\es}[1]{S_{\hspace{-.2ex}{#1}}}
\begin{document} 

\title{Helly type problems in convexity spaces}  

\author{Andreas F. Holmsen}
\date{\today}

\address{\noindent Department of Mathematical Sciences, KAIST,
Daejeon, South Korea \hfill \hfill  \linebreak \and \linebreak
Discrete Mathematics Group,  Institute for Basic Sciences (IBS), Daejeon, South Korea.}

\begin{abstract} 
We report on some  recent progress regarding combinatorial properties in convexity spaces with a bounded Radon number. In particular, we discuss the relationship between the Radon number, the colorful and fractional Helly properties, weak $\varepsilon$-nets, and $(p,q)$-theorems. As an application of the theory of convexity spaces we introduce some nnew classes of uniform hypergraphs and show that they are $\chi$-bounded.
\end{abstract}

\maketitle

\section{Introduction}

\subsection{Background}
The closely related theorems of Helly \cite{helly} and Radon \cite{radon} lie at the foundation of the combinatorial geometry of convex sets in $\mathbb{R}^d$. Since their discovery, more than a century ago, there now exists an extensive literature concerning their generalizations, variations, and applications. For further details, references, and historical notes we refer the reader to one of the surveys \cite{ADLS, bara-sob, lgmm2019,  eckhoff-surv, holmsen-wenger} as well as the textbooks \cite{barabook, mato}.

\smallskip

One of the landmark results of modern combinatorial convexity is the {\em $(p,q)$-theorem} of Alon and Kleitman \cite{pq-alon}, originally conjectured by Hadwiger and Debrunner \cite{had_deb}. Informally speaking, the $(p,q)$-theorem says that if the intersecting $(d+1)$-tuples of a finite family of convex sets in $\mathbb{R}^d$ are {\em evenly distributed}, quantified in terms of the integers $p$ and $q$, then the transversal number of the family is bounded by a function depending only on $p$, $q$, and $d$. This should be held in comparison to Helly's original theorem which asserts that if {\em every} $(d+1)$-tuple is intersecting, then the transversal number of the family equals one, that is, there exists a single point that intersects every member of the family. See \cite{eckhoff-pq} for a historical background on the $(p,q)$-theorem and related questions.

\smallskip

Alon and Kleitman's celebrated proof of the $(p,q)$-theorem has been scrutinized over the years, revealing that the proof does not actually rely much on the geometry of $\mathbb{R}^d$, but rather on certain combinatorial properties of families of convex sets. For instance, the work of Alon, Kalai, Matousek, and Meshulam \cite{akmm} highlights the importance of the combinatorial aspects of the {\em fractional Helly theorem} due to Katchalski and Liu \cite{katch-liu}. 

\smallskip

As it turns out, there already exists a framework for exploring the underlying combinatorial principles of the $(p,q)$-theorem, namely a type of set systems called {\em convexity spaces}. Indeed, the notion of a convexity space was introduced by Levi  \cite{levi} in the 1950's to show that Helly's theorem is a purely combinatorial consequence of Radon's theorem. In the 1970's there was a renewed interest in convexity spaces, in part motivated by the so-called {\em partition conjecture} (also commonly referred to as Eckhoff's conjecture) which would imply a purely combinatorial proof of Tverberg's famous theorem \cite{tverberg}. See \cite{eckhoff-partition} for a historical account of the partition conjecture as of the early 2000's.

\smallskip

The partition conjecture itself was famously disproved by Bukh \cite{bukh} in 2010. In the same manuscript Bukh improved the bounds on the Tverberg numbers for convexity spaces and raised several tantalizing open problems, which incidentally would be highly relevant to the understanding of the combinatorics  of convexity spaces and the $(p,q)$-theorem.

\subsection{Outline of the paper} There are two main parts to this paper. The first part (section 2) acts as a survey. Here we give a formal definition of a convexity space and introduce the main invariants and properties we will be dealing with. This will be accompanied by an account of the recent developments on the relations between these invariants and properties. 
The main takeaway here is Theorem \ref{t:equivalences} which is the accumulation of the works of several independent groups spanning over the last two decades. 

\smallskip

The second part (section 3) contains some new results about colorings of hypergraphs. 
More specifically, we introduce some classes of uniform hypergraphs and show that these are $\chi$-bounded, meaning that their chromatic number is bounded by a function of the clique number. 
To the best of our knowledge, these results have not appeared anywhere before. Their inclusion here is due to the fact that the proofs rely on the theory of convexity spaces, in particular the results of section 2.  

\smallskip

We wrap up with some concluding remarks and questions in section 4.


\subsection{Notation and terminology} We use the following standard notation and terminology. For a positive integer $n$ we write $[n]$ to denote the set $\{1, 2, \dots, n\}$. For a finite set $S$ and an integer $k$ we let $\binom{S}{k}$ denote the family of {\em $k$-tuples} (i.e. $k$-element subsets) of $S$. 
Given a set system $\cal S$ on a ground set $X$, we write $\cup \cal S$ and $\cap \cal S$ to denote the union $\bigcup_{K\in \cal S} K$ and intersection $\bigcap_{K\in \cal S} K$, respectively. We say that $\cal S$ is {\em intersecting} if $\cap \cal S \neq \emptyset$.

\smallskip
The terms {\em set system} and {\em family} (of sets) will be used frequently. 
In general, when using the term set system we allow for {\em infinite} set systems. The term family, on the other hand, will be reserved for {\em finite} families of sets.  


\subsection{Acknowledgements}
This work was partially supported by ERC Advanced Grants ``GeoScape'' 882971 and ``ERMiD'' 101054936, and by the Institute for Basic Science (IBS-R029-C1).
The author wishes to thank the organizers and participants of the {\em Erd{\H o}s Center Focused week: Combinatorial geometry in Radon convexity spaces}, which took place in Budapest, December 2023. 

A special thanks goes to Attila Jung who gave many valuable comments on the original draft of this manuscript. He suggested an alternative definition of the hypergraph property $T_k(m)$ (Definition \ref{d:tkm}) which 
greatly simplified the proof of Theorem \ref{t:chi-bounded}. He also suggested to bound the colorful Helly number directly (Proposition \ref{p:color_bound}), rather than bounding the Radon number (Proposition \ref{p:radon_bound}) as was done in the original draft. 

\section{Convexity spaces: invariants and properties}

\subsection{Basic definitions}
A {\em convexity space} is a set system  $\cal C$ on a (nonempty) ground set $X$ which satisfies the following properties:

\begin{enumerate}
\item[(C1)]\label{ax:empty} $\emptyset, X \in \cal C$.
\item[(C2)]\label{ax:intersection} If $\emptyset\neq \cal D \subset \cal C$, then $\cap \cal D \in \cal C$.
\item[(C3)]\label{ax:nested} If $\emptyset\neq \cal D\subset \cal C$ is totally ordered by inclusion, then $\cup \cal D \in \cal C$.
\end{enumerate}
We denote a convexity space by the pair $(X, \cal C)$ and the members of $\cal C$ are called {\em convex sets}. For an in-depth treatment of convexity spaces we refer the reader to van de Vel's monograph \cite{vandevel}.

\begin{example} \label{ex:spaces} Here we give a few examples of convexity spaces. Further examples can be found in e.g. \cite[section 1.5]{vandevel} and \cite[section 2]{eckhoff-partition}.

\begin{enumerate}
\item \label{ex:standard} The most obvious example of a convexity space is the {\em standard convexity}  which 
occupies the majority of discrete geometry and combinatorial convexity. Here $X = \mathbb{R}^d$ and $\cal C$ consists of the ``usual'' convex sets in $\mathbb{R}^d$. That is, $S\subset \mathbb{R}^d$ is convex provided for all $x, y\in S$ and $t\in [0,1]$ we have $(1-t)x+ty \in S$. 

\smallskip

\item \label{ex:lattice} The {\em lattice convexity} is obtained by setting $X = \mathbb{Z}^d$ and letting $\cal C$ consist of all sets of the form $S\cap \mathbb{Z}^d$ where $S$ is a standard convex set in $\mathbb{R}^d$. Helly numbers and related invariants of this convexity space have been studied in \cite{lattice, JPD, onn}.
A generalization of lattice convexity can be obtained by replacing $\mathbb{Z}^d$ by any subset of $\mathbb{R}^d$. See for instance \cite{ambrus, dillon, DeLorian} for some recent interesting results and research problems.

\smallskip

\item \label{ex:box} The {\em box convexity} is a subspace of the standard convexity where $\cal C$ consists of all axis-parallel boxes in $\mathbb{R}^d$, that is, Cartesian products of segments parallel to the coordinate axes. Radon numbers and related invariants of these spaces were investigated in   \cite{eckhoff-boxes, hare-thompson, jamison}.

\smallskip

\item \label{ex:closure} Let $F$ be a nonempty finite family of subsets of a ground set $X$, and let $F^\cap = \{\cap G : G\subset F\}  \cup \{\emptyset, X\}$.
Weak $\varepsilon$-nets and $(p,q)$-theorems in the finite convexity space $(X, F^\cap)$ were investigated in \cite{akmm}.

\smallskip

\item \label{ex:dual} Dual to the previous example, let $F$ be a nonempty finite family of subsets of a ground set $X$ and suppose that $F$ is an antichain with respect to containment. For a subset $Y \subset X$ let $F_Y = \{S \in F: Y \subset S\}$ be the subfamily of $F$ whose members contain $Y$. By setting $\cal C =\{ F_Y : Y\subset X \}$, we obtain a finite convexity space $(F, \cal C)$. This convexity space appears implicitly in Bukh's counter-example to the partition conjecture \cite{bukh}. 
In section 3 we examine this space in greater detail for the case when $F$ is the family of maximal independent sets of a uniform hypergraph.
\end{enumerate}
\end{example}

\medskip

\noindent 
Given a convexity space $(X, \cal C)$, it follows from property (C1) that any subset $Y\subset X$ is contained in at least one convex set. Furthermore, property (C2) implies that $Y$ is contained in a unique smallest convex set
\[\conv{Y} = \textstyle \cap\{K : K \in {\cal C}, Y\subset K\},\]
which we refer to as the {\em convex hull} of $Y$.

\begin{remark} \label{r:closure}
A set system that satisfies properties (C1) and (C2) is often referred to as a {\em closure system}. 
Note that these two properties alone suffice to define the convex hull operator,  
which is usually referred to as the closure operator in this context. Property (C3) is equivalent to requiring that the convex hull (closure) operator is domain finite, meaning that for any $Y\subset X$ and $p\in \conv Y$ there is a {\em finite} subset $Y'\subset Y$ such that $p\in \conv {Y'}$. See \cite[Theorem 1.3]{vandevel} for details.
\end{remark}

\subsection{Radon numbers} Here we discuss some classical invariants of convexity spaces motivated by Radon's theorem. As we will see in Theorem \ref{t:equivalences}, these invariants are related to several deep structural properties of a convexity space. 

\begin{definition}
Let $Y$ be a (multi)subset of $X$ in a convexity space $(X, \cal C)$. A {\em Radon partition} of $Y$ is a partition $Y = A \cup B$ such that 
\[\conv A \cap \conv B \neq \emptyset.\]
The {\em Radon number} $r(X, \cal C)$ is the minimal integer $n$ (if it exists) such that every subset $Y\subset X$ with $|Y|\geq n$ admits a Radon partition.
\end{definition}

If there exists arbitrarily large subsets with no Radon partition, we write $r(X, \cal C) = \infty$. 
Note that when $X$ is finite we always have $r(X,\cal C) \leq |X|+1$. 

\smallskip

One of the basic facts about the standard convexity on $\mathbb{R}^d$ is that its Radon number equals $d+2$, which is known as Radon's theorem (or Radon's lemma) \cite{radon}. 
The Radon number for the box convexity on $\mathbb{R}^d$ is known and grows asymptotically as $\Theta(\log d)$, 
see e.g. \cite[equation (3.1)]{eckhoff-partition} for the precise expression.
In contrast, the Radon number for the lattice convexity on $\mathbb{Z}^d$ is not known, but it is bounded below by $\Omega(2^d)$ and above by $O(d2^d)$ \cite{onn}.

\begin{definition}
Let $Y$ be a (multi)subset of $X$ in a convexity space $(X, \cal C)$.
For $k\geq 2$, a {\em Tverberg $k$-partition} of $Y$ is a partition $Y = A_1 \cup A_2 \cup \cdots \cup A_k$ such that 
\[\conv {A_1} \cap \conv {A_2} \cap \cdots \cap \conv {A_k} \neq \emptyset.\]
The {\em $k$th Tverberg number} $t_k(X, \cal C)$ is defined as the minimal integer $n$ (if it exists) such that every subset $Y\subset X$ with $|Y|\geq n$ admits a Tverberg $k$-partition. Note that $t_2(X, \cal C) = r(X, \cal C)$. 
\end{definition}

It is not hard to show that if $r(X, \cal C)$ is bounded, then so is $t_k(X, \cal C)$ for every $k\geq 3$. See e.g.  \cite[Exercise 8.3:1]{mato}.
A long-standing open problem raised by Calder and independently by Eckhoff (also known in the litterature the ``partition conjecture'' or ``Eckhoff's conjecture'') claimed that the inequality
\[
t_k(X, \cal C) \leq [r(X, \cal C)-1](k-1)+1    
\]
should hold for every convexity space with a bounded Radon number. This would be a far-reaching generalization of Tverberg's theorem \cite{tverberg} which asserts that the expression above holds with equality 
for the standard convexity on $\mathbb{R}^d$. 

\smallskip

In 2010 Bukh \cite{bukh} disproved the  partition conjecture by showing the following.

\begin{theorem}
    For every $k \geq 3$ there exists a convexity space $(X, \cal C)$ with $r(X, \cal C) = 4$ and $t_k(X, \cal C)\geq 3k-1$.
\end{theorem}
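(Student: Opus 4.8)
The plan is to realise the desired space through the dual construction of Example~\ref{ex:dual}. One fixes a large ground set $X$ and an antichain $F \subseteq 2^X$ (with $X \notin F$), and works inside the finite convexity space $(F, \mathcal{C})$ where $\mathcal{C} = \{F_Y : Y \subseteq X\}$. Since $\mathcal{G} \subseteq F_Y$ precisely when $Y \subseteq \bigcap \mathcal{G}$ (the intersection taken inside $X$), and $F_Y$ shrinks as $Y$ grows, the convex hull of $\mathcal{G} \subseteq F$ is simply $\conv{\mathcal{G}} = F_{\bigcap \mathcal{G}} = \{\,S \in F : S \supseteq \bigcap \mathcal{G}\,\}$ (with $\bigcap \emptyset := X$, so $\conv{\emptyset} = \emptyset$). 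Hence a partition $\mathcal{G} = \mathcal{A}_1 \cup \dots \cup \mathcal{A}_k$ into nonempty blocks is a Tverberg $k$-partition if and only if $\bigcup_{i=1}^k \bigcap \mathcal{A}_i$ is contained in some member of $F$; the case $k = 2$ describes Radon partitions. So the theorem reduces to producing one antichain $F$ with two features: (R) every $4$-element subset of $F$ splits into two blocks whose intersections have union inside some member of $F$, while some $3$-element subset does not; and (T) there is a multiset $\mathcal{G}$ of $3k-2$ members of $F$ for which no partition into $k$ nonempty blocks has the union of its block-intersections contained in a member of $F$.

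Given such an $F$ the conclusions are immediate. Property (R) yields $r(F,\mathcal{C}) = 4$: the bad triple forces $r \ge 4$, while $r \le 4$ because a Radon partition of any $4$ members extends to every larger (multi)subset by throwing the extra elements into one block (which only enlarges its convex hull), and a (multi)subset with a repeated element has an automatic Radon partition (split off one copy; the rest still contains that element, so its intersection lies in that element). Property (T) gives $t_k(F,\mathcal{C}) \ge (3k-2)+1 = 3k-1$ directly from the definition.

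All the difficulty sits in the choice of $F$, and the first guesses fail. Taking $X = \binom{[n]}{2}$ with $F$ the family of vertex stars $S_i = \{e : i \in e\}$ (equivalently lines of a projective plane, or blocks of a Steiner triple system), property (R) is routine, but any three distinct members have empty intersection, so a block of three distinct stars already has convex hull all of $F$. Iterating this, $\{S_1\} \cup \{S_2, S_3, S_4\} \cup \{S_5, S_6, S_7\} \cup \dots$ is a Tverberg $k$-partition of $3k-2$ stars, and such spaces only reach $t_k = 3k-2$ — they satisfy the partition conjecture with equality. What is needed is an antichain whose intersection pattern is fine-tuned so that any four members still split (keeping $r$ at $4$) yet the whole space $F$ is the convex hull only of sufficiently rich subfamilies; this blocks the wasteful $1+3+3+\dots$ assembly above and forces one extra, unusable element into the extremal configuration. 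Producing $F$ with exactly this balance is the substance of Bukh's argument, and I expect it, rather than the bookkeeping above, to be the main obstacle.

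The remaining verification is then finitary. For (R) one enumerates the few ways four members of $F$ can sit relative to one another and exhibits, in each, the required two-block split, together with one triple admitting none. For (T) one fixes the candidate common member $T \in F$, identifies the blocks ``compatible with $T$'' as those whose intersection lies in $T$, and derives a counting obstruction showing that from only $3k-2$ members one cannot form $k$ pairwise disjoint compatible blocks, whatever the partition. Within this step, isolating the right extremal multiset $\mathcal{G}$ and making the obstruction immune to every $k$-partition should absorb essentially all of the work.
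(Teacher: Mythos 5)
Your reduction is sound as far as it goes: in the dual space $(F,\mathcal{C})$ of Example~\ref{ex:spaces}\eqref{ex:dual} one indeed has $\conv{\mathcal{G}} = F_{\cap\mathcal{G}}$, so Tverberg $k$-partitions are exactly the partitions whose block-intersections have union contained in some member of $F$; your observations about extending a Radon partition of a $4$-subset to larger (multi)sets, about repeated elements, and about $t_k \geq 3k-1$ following from a bad multiset of size $3k-2$ are all correct. The genuine gap is that you never produce the antichain $F$. Properties (R) and (T) pull in opposite directions --- (R) wants every four members to split cheaply, while (T) wants $3k-2$ members admitting no useful $k$-split --- and, as your own discussion of vertex stars shows, the natural candidates collapse to $t_k = 3k-2$ and so satisfy the partition conjecture rather than refute it. Exhibiting a structure that resolves this tension \emph{is} the theorem: without a concrete $F$ there is nothing to enumerate in your ``finitary'' final step, and the counting obstruction you invoke for (T) is not formulated, let alone proved. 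As written, the proposal is a correct reformulation of the problem together with an accurate forecast of where the work lies, but it contains no proof.

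For calibration: the paper itself does not prove this statement either; it is quoted from Bukh \cite{bukh}, whose counterexample does live (implicitly) in the dual framework you chose, so your setup points in the right direction. To complete the argument you would need to write down the specific antichain $F$ (or equivalent combinatorial gadget), carry out the case analysis establishing (R) with $r=4$ exactly, and prove the obstruction for (T) against \emph{every} $k$-partition of the extremal multiset --- none of which is present in the proposal.
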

By combining Bukh's counterexample with the disjoint sum construction (see Definition 1.14, page 17 and Theorem 3.4, page 207 of \cite{vandevel}) provides, for every $r\geq 4$ and $k\geq 3$, a convexity space $(X, \cal C)$ where $t_k(X, \cal C) \geq (k-1) (r-1) + 1 + \lfloor \frac{r-1}{3} \rfloor$.

In the same paper, Bukh also proved an upper bound for the $k$th Tverberg number. He showed that $t_k(X, \cal C)  = O(k^2\log^2k)$, where the big $O$ is hiding a constant depending on the Radon number. He also suggested that the true upper bound might even be linear in $k$. 
This was recently proved by P{\'a}lv{\"o}lgyi.

\begin{theorem}[P{\'a}lv{\"o}lgyi \cite{domotor}] \label{t:linear-tverberg}
For any convexity space $(X, \cal C)$ with a finite Radon number, we have $t_k(X, \cal C) \leq c \cdot k$, where $c = c(r)$ is a constant depending only on $r = r(X, \cal C)$.
\end{theorem}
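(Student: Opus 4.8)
The plan is to reduce the theorem to the existence, in every $n$-element point set, of a \emph{single} point of Tverberg depth linear in $n$, and then to produce such a point from the point-selection machinery attached to a bounded Radon number. For the reduction, observe that a Tverberg $k$-partition of a set $Y$ amounts to nothing more than a choice of $k$ pairwise disjoint nonempty subsets $A_1\cm\dots\cm A_k\subseteq Y$ together with a point $p\in\bigcap_{i=1}^{k}\conv{A_i}$: given such data, replace $A_1$ by $A_1\cup\bigl(Y\setminus\bigcup_i A_i\bigr)$, and since the convex hull is monotone $p$ still lies in the hull of the enlarged block, so the blocks now partition $Y$. Hence it suffices to find a constant $c=c(r)$ such that every $n$-element $Y\subseteq X$ contains a point lying in the convex hulls of at least $n/c$ pairwise disjoint subsets of $Y$.

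To obtain such a point I would route through a \emph{first selection lemma} in the sense of B\'ar\'any: constants $m=m(r)$ and $\alpha=\alpha(r)>0$ such that every $n$-element $Y\subseteq X$ has a point $p$ with $p\in\conv{S}$ for at least $\alpha\binom{n}{m}$ of the sets $S\in\binom{Y}{m}$. Granting this, let $\cal S\subseteq\binom{Y}{m}$ collect these $m$-subsets and greedily extract pairwise disjoint members: a chosen member meets at most $m\binom{n-1}{m-1}=\frac{m^{2}}{n}\binom{n}{m}$ of the $m$-subsets of $Y$, so after $j$ rounds at most $j\cdot\frac{m^{2}}{n}\binom{n}{m}$ members of $\cal S$ have been used up and the greedy process survives for at least $\alpha n/m^{2}$ rounds. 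This yields $\alpha n/m^{2}$ pairwise disjoint subsets of $Y$, each containing $p$ in its convex hull; if $n\ge(m^{2}/\alpha)k$ at least $k$ of them appear, and by the first paragraph they assemble into a Tverberg $k$-partition. Thus $t_k(X\cm\cal C)\le(m^{2}/\alpha)k=O_r(k)$.

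The whole weight of the argument then rests on the first selection lemma in the abstract setting, where the only hypothesis is a finite Radon number. By Theorem~\ref{t:equivalences} such a space already has the fractional and colorful Helly properties, admits weak $\varepsilon$-nets, and satisfies a $(p,q)$-theorem; the task is to combine these with a same-type / Ramsey-type selection argument in the spirit of B\'ar\'any's original proof, while keeping every parameter a function of $r$ alone. This is the real obstacle and the actual content of the theorem. By way of comparison, the elementary blocking argument — cut $Y$ into $t_{k_1}$ blocks, Tverberg-split each block, then Tverberg-split the resulting common points — gives only the sub-multiplicativity $t_{k_1 k_2}\le t_{k_1}t_{k_2}$, hence the superlinear bound $t_k\le r\cdot k^{\log_2 r}$, while Bukh's sharper argument still loses polylogarithmic factors, giving $t_k=O_r(k^{2}\log^{2}k)$; removing these losses entirely is what is being claimed.
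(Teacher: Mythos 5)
There is a genuine gap, and you name it yourself: everything is made to rest on a ``first selection lemma'' for convexity spaces with bounded Radon number --- constants $m=m(r)$ and $\alpha=\alpha(r)>0$ such that every $n$-point set admits a point lying in $\conv{S}$ for at least $\alpha\binom{n}{m}$ of the $m$-subsets $S$ --- and you then declare this step to be ``the real obstacle and the actual content of the theorem'' without proving it. That is not a proof; it is a (correct and clean) reduction of the theorem to an unproved statement of comparable depth. The surrounding bookkeeping is fine: the observation that a common point of the hulls of $k$ pairwise disjoint subsets upgrades to a genuine Tverberg $k$-partition by absorbing the leftover points into one block (convex-hull monotonicity holds in any convexity space), and the greedy extraction of $\alpha n/m^{2}$ pairwise disjoint $m$-sets from a family of $\alpha\binom{n}{m}$ of them, are both correct. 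But none of the ingredients you list from Theorem~\ref{t:equivalences} (fractional Helly, colorful Helly, weak $\varepsilon$-nets, $(p,q)$) is shown to yield the selection lemma, and the ``same-type/Ramsey-type'' argument you gesture at is exactly what is unavailable in an abstract convexity space, where there is no order type, no general position, and no analogue of the same-type lemma.

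For comparison: the paper does not prove this theorem either --- it is quoted from P{\'a}lv{\"o}lgyi \cite{domotor} --- but it does indicate the actual route: Bukh's iterated-partition machinery combined with the fractional Helly theorem for convexity spaces, in the spirit of the ``weak Tverberg theorem'' of \cite[Proposition 10]{akmm}. That approach produces Tverberg points directly by repeatedly applying fractional Helly to the centers of partitions of blocks, rather than by first establishing a point of high simplicial depth; the difficulty P{\'a}lv{\"o}lgyi overcomes is precisely to make the recursion lose only a constant factor (depending on $r$) rather than the polylogarithmic factors in Bukh's $O_r(k^{2}\log^{2}k)$ bound. If you want to salvage your outline, you would need to actually derive the first selection lemma, e.g.\ from the selection lemma of \cite[Proposition 11]{akmm}, and verify that all constants there depend only on $r$; as written, the proposal proves nothing beyond the reduction.
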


The proof of Theorem \ref{t:linear-tverberg} uses methods developed by Bukh \cite{bukh} together with a {\em fractional Helly theorem} for convexity spaces (which will be discussed below). A similar approach was also used in \cite[Proposition 10]{akmm} where they establish a ``weak Tverberg theorem'' in convexity spaces of the form $(X, F^\cap)$.

\smallskip

The main drawback of this approach is that it requires a somewhat enormous  constant $c(r)$, roughly $r^{r^{r^{\log r}}}$, which is most likely very far from the truth. The following question appears in \cite{bara-sob, domotor}.

\begin{problem}
Does there exist an absolute constant $c$ such that 
\[t_k(X, \cal C) \leq c  \cdot r(X, \cal C) \cdot k\] holds for every convexity space $(X, \cal C)$ with a bounded Radon number?
\end{problem}

\medskip

\paragraph{\bf Helly type invariants and properties}
We now turn our attention to Helly type properties. 
As a matter of fact, these are properties  of arbitrary set systems $(X, \cal S)$ and are not necessarily restricted  to convexity spaces. In particular, the convex hull operator is not used in any of their definitions. 

\begin{definition}
Let $(X, \cal S)$ be a set system.
The {\em Helly number} $h(X, \cal S)$ is  the maximal integer $n$ (if it exists) for which there exists a family $F\subset \cal S$ of size $n$, where $\cap F = \emptyset$ and 
$F\setminus \{K\}$ is intersecting for all $K\in F$.  In other words, the Helly number is the maxmial size of a minimally non-intersecting family. 
\end{definition}

Helly's theorem \cite{helly} states that the Helly number for the standard convexity on $\mathbb{R}^d$ equals $d+1$, while it is a simple exercise to show that the Helly number for the box convexity on $\mathbb{R}^d$ equals 2. 
For the lattice convexity on $\mathbb{Z}^d$, the Helly number equals $2^d$ \cite{JPD}.  
In general, Levi \cite{levi} showed that if a convexity space has bounded Radon number, then its Helly number is also bounded, and in particular $h(X, \mathcal{C})< r(X, \cal C)$.

\smallskip

While the Helly number is a classical and important invariant of a convexity space, here we focus our attention on some variations inspired by the colorful \cite{col-hell} and fractional \cite{katch-liu} versions of Helly's theorem.

\begin{definition}
A set system $(X, \cal S)$ has the {\em colorful Helly property} if there exists an integer $m$ such that for any finite families $F_1, F_2, \dots, F_m \subset \cal S$ with $K_1\cap K_2\cap \cdots \cap K_m \neq \emptyset$ for all choices $K_1\in F_1, K_2\in F_2, \dots, K_m\in F_m$, there is one of the $F_i$ that is intersecting. 
The {\em colorful Helly number} $h_c(X, \cal S)$ is the smallest integer $m$ (if it exists) for which $(X, \cal S)$ has the colorful Helly property. 
\end{definition}

The colorful Helly theorem \cite{col-hell} states that the standard convexity on $\mathbb{R}^d$ has colorful Helly number $d+1$. The colorful Helly number for the box convexity on $\mathbb{R}^d$ also equals $d+1$. Using elementary collapsibility arguments \cite[Lemma 20]{akmm}, it can be shown that the lattice convexity on $\mathbb{Z}^d$ has colorful Helly number $2^d$. See \cite[Theorem 2.1]{top-col-hel} for the relation between collapsibility and the colorful Helly number.
By taking $F_1 = F_2 =\cdots = F_m$, we observe that
the Helly number is always bounded above by the colorful Helly number, that is, for any set system satisfying the colorful Helly property we have $h(X, \cal S) \leq h_c(X, \cal S)$.

\smallskip

The colorful Helly property has not been investigated in general convexity spaces until quite recently, when it was shown that it is closely related to the Radon number. 

\begin{theorem}[Holmsen and Lee \cite{holmsen-arx}] \label{t:radon-to-colorful}
If a convexity space $(X, \cal C)$ has a bounded Radon number, then it has the colorful Helly property. In particular,  $h_c(X, \cal C)\leq m$, where $m = m(r)$ is a constant depending only on $r = r(X, \cal C)$.
\end{theorem}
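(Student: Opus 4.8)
The plan is to deduce the colorful Helly property from the Radon number by first extracting a \emph{fractional Helly theorem} and then converting fractional Helly into a colorful statement. The main reason to believe this works is that the same package of implications underlies the Alon--Kleitman machinery, and the crucial point is that none of the intermediate steps need the geometry of $\mathbb{R}^d$ — only a bounded Radon number, a bounded VC-type dimension coming from the bounded Helly number (Levi's theorem gives $h < r$), and a counting argument. So the first step I would carry out is to show that a bounded Radon number forces a bounded \emph{VC dimension} (or at least a bounded dual shatter function / finite "exclusion number"), since a Radon partition of a shattered set is obstructed by the realizability of all $2^n$ subsets as traces. Concretely, if a set of size $n$ is shattered by $\cal C$ then it cannot have a Radon partition, so $n < r$; this bounds the VC dimension of $\cal C$ by $r-1$.

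Next I would invoke (or reprove) a fractional Helly theorem for convexity spaces: there is $\alpha = \alpha(r) > 0$ and $h' = h'(r)$ such that if a finite family $F \subset \cal C$ of size $N$ has at least $\binom{N}{h'}/2$ — or more generally $\beta \binom{N}{h'}$ for each $\beta>0$ — intersecting $h'$-tuples, then some positive fraction $c(\beta)\cdot N$ of the members have a common point. This is where the real content lives, and it is essentially the combination of Levi's bound $h < r$ with a Ramsey/counting argument in the spirit of Katchalski--Liu as adapted by Alon--Kalai--Matou\v{s}ek--Meshulam: bounded Helly number plus bounded VC dimension yields fractional Helly. I would then feed fractional Helly into the standard reduction to the colorful Helly property: given color classes $F_1,\dots,F_m$ with every rainbow $m$-tuple intersecting, a double-counting argument (count rainbow intersecting tuples, then average over one color class) produces a color class $F_i$ a constant fraction of whose members pairwise — in fact $h'$-wise — intersect, and then fractional Helly plus an iteration (or a direct application with $\beta$ close to $1$) forces $F_i$ to be intersecting, once $m$ is taken large enough in terms of $\alpha, h', r$.

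The step I expect to be the main obstacle is establishing the fractional Helly theorem itself with an explicit, finite dependence on $r$ alone — in particular controlling the interaction between the Helly number bound and the VC dimension bound inside the counting argument, since a naive Ramsey-theoretic approach gives towers that may not even be bounded in the right way, and one must be careful that the "domain finiteness" axiom (C3) is used to reduce everything to finite subfamilies. A secondary subtlety is that convex hulls here are abstract, so one cannot use any separation or dimension-counting shortcut; every obstruction to a common point must be traced back, via the closure operator, to a minimally non-intersecting subfamily of size $< r$. I would therefore organize the argument so that fractional Helly is stated and used as a black box with constants $\alpha(r), h'(r)$, isolate the combinatorial core (bounded Helly number $+$ bounded VC dimension $\Rightarrow$ fractional Helly) as a lemma, and then present the colorful deduction as a short clean double-counting argument; the bookkeeping of how large $m = m(r)$ must be is routine once the fractional Helly constants are in hand.
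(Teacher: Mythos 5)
Your proposal has two genuine gaps, and the second is fatal to the overall architecture.

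First, the opening step --- ``if a set of size $n$ is shattered by $\cal C$ then it cannot have a Radon partition, so the VC dimension of $\cal C$ is at most $r-1$'' --- is false. If $Y$ is shattered and $Y = A \cup B$ is a partition, you get convex sets $K_A, K_B$ with $K_A \cap Y = A$ and $K_B \cap Y = B$, hence $\conv A \subset K_A$ and $\conv B \subset K_B$; but $K_A \cap K_B$ need only miss $Y$, not be empty, so nothing prevents $\conv A \cap \conv B \neq \emptyset$. The standard convexity on $\mathbb{R}^2$ is a concrete counterexample: any $n$ points in convex position are shattered by convex sets, so the VC dimension of $\cal C$ is infinite, while the Radon number is $4$. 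The argument you have in mind works only for \emph{halfspaces} in a \emph{separable} convexity space, where the complement of each set in the shattering family is itself convex (this is exactly \cite[Claim A.2]{moran}, used for Theorem \ref{t:VC-to-eps}); it does not transfer to $\cal C$ itself. Consequently the lemma ``bounded Helly number $+$ bounded VC dimension $\Rightarrow$ fractional Helly'' has nothing to be applied to in a general convexity space with bounded Radon number.

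Second, even granting a fractional Helly theorem, your final reduction runs the implication in the wrong direction. The established combinatorial implication is colorful Helly $\Rightarrow$ fractional Helly (Theorem \ref{t:col-to-frac}); the converse for convexity spaces is only known by going all the way around the cycle of Theorem \ref{t:equivalences} (fractional $\Rightarrow$ weak $\varepsilon$-nets $\Rightarrow$ bounded Radon number $\Rightarrow$ colorful), and the last arrow is precisely the theorem you are trying to prove, so this route is circular. Your proposed double-counting shortcut does not close the gap: fractional Helly with density parameter $\alpha$ yields an intersecting subfamily of size $\beta(\alpha)|F_i|$ with $\beta(\alpha) < 1$, whereas the colorful Helly property demands that an \emph{entire} color class be intersecting, and averaging over rainbow tuples only shows that many, not all, $h'$-tuples of some $F_i$ intersect. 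The actual proof of Holmsen and Lee proceeds quite differently: it works directly from Radon partitions, using the fact that the Tverberg numbers $t_k(X,\cal C)$ are bounded in terms of $r$, and extracts a non-intersecting rainbow tuple from non-intersecting witnesses in each color class via Tverberg partitions; the resulting bound on $m(r)$ is expressed through Stirling numbers of the second kind and is roughly $r^{r^{\log r}}$.
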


The proof of Theorem \ref{t:radon-to-colorful} uses Tverberg partitions, and relies on the fact that the Tverberg numbers are bounded by a function of the Radon number. The bound on the colorful Helly number obtained in the proof of Theorem \ref{t:radon-to-colorful} can be expressed in terms of Stirling numbers of the second kind (the number of unordered partitions of $[n]$ into $k$ nonempty parts), and gives an upper bound on $m(r)$ which is roughly $r^{r^{\log r}}$. On the other hand, the case of box convexity shows that $m(r)$ grows at least exponentially in $r$. We have little reason to believe that either  of these bounds are optimal, but we lack methods for constructing interesting convexity spaces with bounded Radon number. One of the challenges is that the Radon number is not monotone with respect to deleting points from $X$ (i.e. restricting $\cal C$ to a subset of $X$).

\begin{problem}
Improve the bounds (upper and lower) on the colorful Helly number for a convexity space with Radon number $r$.
\end{problem}

\begin{definition}
A set system
$(X, \cal S)$ has the {\em fractional Helly property} if there exists an integer $k$ and a function $\beta:(0,1) \to (0,1)$ such that every finite family $F\subset \cal S$ with at least $\alpha\binom{|F|}{k}$ intersecting $k$-tuples, contains an intersecting subfamily of size at least $\beta(\alpha)|F|$. In this case we  also say that $(X, \cal S)$ satisfies the fractional Helly property for $k$-tuples. 
The {\em fractional Helly number} $h_f(X, \cal S)$ is the smallest integer $k$ for which $(X, \cal S)$ satisfies the fractional Helly property.
\end{definition}

The fractional Helly theorem \cite{katch-liu} asserts that the fractional Helly number for the standard convexity on $\mathbb{R}^d$ equals $d+1$. The same fractional Helly number also holds for the box convexity on $\mathbb{R}^d$.
A remarkable theorem due to B{\'a}r{\'a}ny and Matou{\v s}ek \cite{lattice} asserts that the fractional Helly number for the lattice convexity on $\mathbb{Z}^d$ equals $d+1$. Another result of Matou{\v s}ek \cite{mato-vc} asserts that if the set system $\cal S$ has bounded VC-dimension, then $(X, \cal S)$ has the fractional Helly property. More spcifically, if the {\em dual shatter function}  of $\cal S$ satisfies $\pi^*_{\cal S}(m) = o(m^k)$, then $(X, \cal S)$ has fractional Helly number at most $k$. 

\smallskip

Even though the colorful and fractional versions of Helly's theorem are classical results by now (at least in the standard convexity on $\mathbb{R}^d$), their relationship was only recently discovered.

\begin{theorem} \label{t:col-to-frac} Let $(X, \cal S)$ be a set system with $h_c(X, \cal S) = m$.
Then $(X, \cal S)$ satisfies the fractional Helly property for $m$-tuples with a function $\beta(\alpha) = \Omega(\alpha^{m^{m}})$.  In particular, $h_f(X, \cal S) \leq h_c(X, \cal S)$.
\end{theorem}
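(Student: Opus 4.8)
The goal is to deduce the fractional Helly property for $m$-tuples from the colorful Helly property with parameter $m$, where $m = h_c(X,\cal S)$. The natural strategy is a double-counting / pigeonhole argument in the spirit of the Kalai--Alon proof of the fractional Helly theorem via the colorful version, but carried out purely combinatorially since we have no underlying geometry. Fix a finite family $F \subset \cal S$ with $|F| = n$ and at least $\alpha \binom{n}{m}$ intersecting $m$-tuples. I want to produce an intersecting subfamily of size $\Omega(\alpha^{m^m}) \cdot n$.

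\textbf{Step 1 (cleaning via iterated averaging).} First I would pass to a large sub-family in which the intersecting $m$-tuples are ``spread out'' — concretely, I would repeatedly delete a member of $F$ that lies in few intersecting $m$-tuples, as long as such a member exists. Since deleting one element destroys at most $\binom{n-1}{m-1}$ of the $m$-tuples, a standard potential argument shows that after deleting at most $(1-\delta)n$ members (for a suitable $\delta = \delta(\alpha,m)$, polynomial in $\alpha$) we reach a sub-family $F'$ of size $\geq \delta n$ in which \emph{every} member is contained in at least, say, $\frac{\alpha}{2}\binom{|F'|}{m-1}$ intersecting $m$-tuples. This is the ``every vertex has high degree in the $m$-uniform hypergraph of intersecting tuples'' regime.

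\textbf{Step 2 (building a colorful configuration).} Working inside $F'$, I would greedily extract $m$ pairwise disjoint subfamilies $F_1, \dots, F_m$, each of size $\rho |F'|$ for an appropriate $\rho = \rho(\alpha,m)$, such that for every transversal $K_1 \in F_1, \dots, K_m \in F_m$ the intersection $K_1 \cap \cdots \cap K_m$ is nonempty. The point is that the high-degree condition lets one find, layer by layer, a dense sub-family all of whose members are ``compatible'' with the ones already chosen; the losses at each of the $m$ layers compound multiplicatively, which is where the tower-type exponent $m^m$ in $\beta(\alpha) = \Omega(\alpha^{m^m})$ enters. Here one must be careful that ``most'' $m$-tuples being intersecting does not by itself give a \emph{complete} colorful pattern, so the extraction has to be done on a hypergraph that has first been made dense enough (via a further application of Step 1's averaging, or a Kővári--Sós--Turán-type supersaturation argument) to guarantee a complete $m$-partite sub-configuration.

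\textbf{Step 3 (applying colorful Helly).} Once $F_1, \dots, F_m$ with the complete transversal-intersection property are in hand, the hypothesis $h_c(X,\cal S) = m$ applies directly: one of the $F_i$ is intersecting. Since $|F_i| = \rho|F'| \geq \rho\delta n = \Omega(\alpha^{m^m}) \cdot n$, this $F_i$ is the desired large intersecting subfamily, and the ``in particular'' clause $h_f(X,\cal S) \le h_c(X,\cal S)$ follows immediately. The main obstacle I anticipate is Step 2: extracting a \emph{complete} $m$-partite intersecting configuration (not merely a dense one) from a hypergraph that is only known to be $\alpha$-dense requires an honest supersaturation/regularization argument, and tracking how the density degrades through the $m$ nested extractions is exactly what forces the exponent to be as large as $m^m$ rather than something polynomial in $m$.
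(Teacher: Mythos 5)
Your Step 2 is where the argument breaks, and it cannot be repaired by supersaturation. You ask for $m$ pairwise disjoint subfamilies $F_1,\dots,F_m$, \emph{each of size $\rho|F'|$}, such that every transversal is intersecting; in the $m$-uniform hypergraph of intersecting $m$-tuples this is a complete $m$-partite subhypergraph with parts of linear size. Density alone does not provide this: already for $m=2$, an $\alpha$-dense graph (e.g.\ a random graph of density $\alpha$) contains no complete bipartite subgraph $K_{t,t}$ with $t\gg \log n$, and the K\H{o}v\'ari--S\'os--Tur\'an / Erd\H{o}s box theorem that you invoke only yields parts of size $O(\log n)$ (in fact $O((\log n)^{1/(m-1)})$ for $m$-uniform hypergraphs). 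Applying the colorful Helly property to such a configuration produces an intersecting subfamily of size $O(\log n)$, not $\Omega(n)$, so Step 3 does not deliver the fractional Helly conclusion. The only way to get a linear-size structure is to exploit the colorful Helly hypothesis \emph{during} the extraction, at every scale, rather than once at the end as you do.

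That is exactly what the paper's proof does, via a black-box combinatorial theorem on clique numbers of uniform hypergraphs (Holmsen, \emph{Combinatorica} 2020, cited as \cite{holmsen-cliques}): one forms the $m$-uniform hypergraph $H$ whose edges are the intersecting $m$-tuples of $F$, observes that the colorful Helly property forbids, hereditarily, any complete $m$-partite pattern in which no part spans a clique, and the quoted theorem states that a hypergraph with $\alpha\binom{n}{m}$ edges and this forbidden-substructure property contains a clique of size $\Omega(\alpha^{m^m})\, n$. A clique in $H$ is a subfamily all of whose $m$-tuples intersect; to conclude that this subfamily is actually intersecting one needs the further inequality $h(X,\mathcal{S})\le h_c(X,\mathcal{S})=m$ (obtained by taking $F_1=\cdots=F_m$ in the colorful Helly property). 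This last step is absent from your write-up, and while your route happens not to need it, any corrected version of your argument that produces a large clique rather than a large color class will. Your Step 1 (degree regularization) is fine but is not the source of the difficulty.
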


This theorem is a consequence of a result on the clique number of uniform hypergraphs \cite{holmsen-cliques}. If the set system $(X, \cal S)$ has colorful Helly number $m$, and $F\subset \cal S$ is a finite subfamily, we consider the $m$-uniform hypergraph $H$ whose vertices correspond to the members of $F$ and edges correspond to intersecting $m$-tuples in $F$. (In other words, $H$ is the set of $(m-1)$-faces of the nerve complex of $F$.) The colorful Helly property can be interpreted as forbidding certain subhypergraphs in $H$, and the main result of \cite{holmsen-cliques} implies that if $H$ has many edges, then $H$ must contain a large clique. Since $h(X, \cal S) \leq h_c(X, \cal S)$, it follows that a clique in $H$ corresponds to an intersecting subfamily in $F$. 

\smallskip

The standard convexity on $\mathbb{R}^d$ satisfies the fractional Helly property for $(d+1)$-tuples with the function $\beta(\alpha) = 1-(1-\alpha)^{1/(d+1)}$. This was proved by Kalai \cite{kalai-upper} and independently by Eckhoff \cite{eckhoff}, and simple examples show that this is best possible. 

\begin{problem}
    Improve the bounds (upper and lower) on the function $\beta(\alpha)$ appearing in the fractional Helly theorem for set systems with $h_c(X, \cal S) = m$.
\end{problem}

Let $F$ be a finite family of subsets of a ground set $X$. 
The {\em transversal number} $\tau(F)$ is the smallest integer $k$ such that $F$ can be partitioned $F = F_1 \cup F_2 \cup \cdots \cup F_k$ such that each $F_i$ is intersecting. 
The {\em fractional transversal number} $\tau^*(F)$ is the minimum of $\sum_{x\in X}f(x)$ over all functions $f:X \to [0,1]$ such that $\sum_{x\in S}f(x)\geq 1$ for every $S\in F$.

\begin{definition}
A set system $(X, \cal S)$ has the {\em weak $\varepsilon$-net property} if there exists a function $g : \mathbb{Q} \to \mathbb{N}$ such that for any finite family $F\subset \cal S$ we have $\tau(F) \leq g(\tau^*(F))$.
\end{definition}

It is an important result that the standard convexity on $\mathbb{R}^d$ has the weak $\varepsilon$-net property \cite{weak-nets}, and determining the optimal asymptotic behavior of the function $g$ is regarded as a major open problem in discrete geometry. (For the current best bounds see \cite{lower-nets, natan}.) Bukh \cite{bukh} showed that every convexity space with $r(X,\cal C)\leq 3$ has the weak $\varepsilon$-net property, and asked whether this is true for any  convexity space with bounded Radon number. It is a central question in combinatorics to identify properties of set systems that imply a bound on the transversal number and in particular the weak $\varepsilon$-net property. See e.g. \cite[Problem 7]{akmm}. One of the main results of \cite{akmm} gives such a criterion. 

\begin{theorem}[Alon, Kalai, Matou{\v s}ek, Meshulam \cite{akmm}] \label{t:frac-to-eps}
Let $(X, \cal C)$ be a convexity space which satisfies the fractional Helly property.
Then $(X, \cal C)$ satisfies the weak $\varepsilon$-net property. In particluar, the weak $\varepsilon$-net property holds with a function $g(x) = a x^b$ where $a$ and $b$ are constants that depend only on $h_f(X, \cal C)$ and the fractional Helly function $\beta(\alpha)$.
\end{theorem}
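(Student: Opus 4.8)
The plan is to follow the approach of Alon, Kalai, Matou{\v s}ek and Meshulam \cite{akmm}: reduce the statement to the construction of small weak $\varepsilon$-nets, and build those nets by iterating the fractional Helly property.

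\emph{Step 1 (reduction to weak nets).} First I would record that if $\tau^*(F)=t$ for a finite family $F\subseteq\cal C$, then normalizing an optimal fractional cover yields a probability measure $\mu$ on $X$ with $\mu(S)\ge 1/t$ for every $S\in F$. Hence it suffices to prove the following: there is a function $N$, polynomial in its argument with degree and leading constant controlled by $h_f(X,\cal C)$ and the fractional Helly function $\beta$, such that for every finite $F\subseteq\cal C$ and every probability measure $\mu$ on $X$, the subfamily $\{S\in F:\mu(S)\ge\varepsilon\}$ can be pierced by at most $N(1/\varepsilon)$ points. Indeed, applied with $\varepsilon=1/\tau^*(F)$ this gives $\tau(F)\le N(\tau^*(F))$, i.e.\ the weak $\varepsilon$-net property with $g=N$, which has the form $g(x)=ax^b$.

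\emph{Step 2 (the iterative construction).} To pierce $\{S:\mu(S)\ge\varepsilon\}$ I would apply the following step repeatedly, maintaining the subfamily $F'$ not yet pierced. If $|F'|<2h_f/\varepsilon$, take one point from each member and stop (this adds $O(1/\varepsilon)$ points in total). Otherwise, the identity $\sum_{S\in F'}\mu(S)=\int_X\#\{S\in F':x\in S\}\,d\mu(x)\ge\varepsilon|F'|$ shows that some point lies in at least $\varepsilon|F'|$ members of $F'$; since every $h_f$-subset of those members is intersecting, $F'$ has at least $(\varepsilon/2)^{h_f}\binom{|F'|}{h_f}$ intersecting $h_f$-tuples (using $|F'|\ge 2h_f/\varepsilon$). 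The fractional Helly property for $h_f$-tuples now yields an intersecting subfamily of $F'$ of size at least $\beta\big((\varepsilon/2)^{h_f}\big)|F'|$, i.e.\ a point contained in a $\beta\big((\varepsilon/2)^{h_f}\big)$-fraction of $F'$; add this point to the net and delete from $F'$ the members it hits. As $\beta$ is fixed and $(\varepsilon/2)^{h_f}$ is polynomial in $\varepsilon$, each step resolves a polynomial-in-$\varepsilon$ fraction of the current family.

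\emph{Step 3 (the main obstacle).} The step I expect to be the crux is bounding the \emph{number} of iterations so that the net size depends on $\varepsilon$ alone. Carried out naively, the process above runs for about $\beta\big((\varepsilon/2)^{h_f}\big)^{-1}\log|F|$ steps, and the $\log|F|$ factor is not permitted by the theorem. Removing it is the technical heart of \cite{akmm}: one must refine the crude ``delete and recurse'' step into a deletion (or reweighting) procedure in which the portion of the family \emph{definitively} resolved per step is bounded below purely in terms of $\varepsilon$ — this is, morally, also why the greedy LP-rounding bound is not enough, since its integrality gap $\tau/\tau^*$ can be as large as $\Theta(\log|F|)$ for general set systems, and it is exactly this phenomenon that the fractional Helly hypothesis must suppress. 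The argument can be repackaged through the abstract $(p,q)$-theorem — fractional Helly together with LP duality and a weighted version of the fractional Helly property show that any family with the $(p,h_f)$-property has fractional transversal number polynomial in $p$, and $\tau^*(F)\le t$ forces $F$ to have the $(\lceil h_f t\rceil+1,h_f)$-property (apply the averaging identity of Step 2 to each $(\lceil h_f t\rceil+1)$-element subfamily) — but the nontrivial half of the $(p,q)$-theorem is again the weak-net construction, so this reformulation does not bypass the obstacle. Once the construction is in place with net size polynomial in $1/\varepsilon$, setting $\varepsilon=1/\tau^*(F)$ and tracking the constants gives $g(x)=ax^b$ with $a$ and $b$ depending only on $h_f(X,\cal C)$ and $\beta$.
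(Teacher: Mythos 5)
Your Step 1 reduction is correct, and you have correctly located the obstacle: any scheme of the form ``find a point stabbing a $\beta$-fraction of the surviving family, delete, and recurse'' necessarily runs for $\Theta(\log|F|)$ rounds, so Step 2 cannot be repaired from within. But having named the obstacle, you do not overcome it, and the part you leave out is precisely the content of the theorem. The paper's own route (which is just a pointer to \cite{akmm}) is: fractional Helly $\Rightarrow$ a weak Tverberg theorem \cite[Proposition 10]{akmm} $\Rightarrow$ a selection lemma \cite[Proposition 11]{akmm} $\Rightarrow$ the weak $\varepsilon$-net, modeled on the classical proof for $\mathbb{R}^d$ \cite[Theorem 10.4.2]{mato}. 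The missing idea is a change in what the net points \emph{are}: rather than points chosen to stab a fraction of $F$, the net consists of ``selection points'' lying in $\conv{T}$ for $h_f$-tuples $T$ drawn from a point sample whose size is bounded by a polynomial in $1/\varepsilon$ alone; the selection lemma (proved from fractional Helly via the weak Tverberg theorem) guarantees that a single point of $X$ lies in a constant fraction of the hulls $\conv{T}$, and this is what decouples the net size from $|F|$.

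A structural symptom of the gap: your argument never uses the convex hull operator, i.e.\ it would apply verbatim to an arbitrary set system with the fractional Helly property. The theorem, however, is stated for convexity spaces, and the known proof uses the hull operator essentially (the weak Tverberg theorem and the selection lemma are statements about $\conv{\cdot}$ of point tuples). An argument that proves the set-system version as a byproduct should be treated with suspicion --- indeed, without convex hulls your Step 2 degenerates to greedy covering, whose $\tau/\tau^*$ gap is genuinely $\Theta(\log|F|)$. Your Step 3 remark that the $(p,q)$-reformulation ``does not bypass the obstacle'' is accurate, but it leaves the proof attempt without its central construction.
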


The proof of Theorem \ref{t:frac-to-eps} relies on using the fractional Helly property to establish a ``weak Tverberg theorem'' \cite[Proposition 10]{akmm}. This in turn can be used in combination with the fractional Helly property to establish a selection lemma \cite[Proposition 11]{akmm}, which is then used to construct a weak $\varepsilon$-net. This proof strategy is modeled after one of the well-known proofs of the weak $\varepsilon$-net theorem for the standard convexity on $\mathbb{R}^d$. See e.g. \cite[Theorem 10.4.2]{mato}.

\smallskip

The weak $\varepsilon$-net property was recently investigated by Moran and Yehudayoff \cite{moran} from a  different point of view. A set system $(X, \cal S)$ is {\em compact} if every subsystem $\cal F \subset \cal S$ with $\cap \cal F = \emptyset$ contains a finite family $F \subset \cal F$ such that $\cap F = \emptyset$. 

\begin{theorem}[Moran and Yehudayoff \cite{moran}]\label{t:VC-to-eps} Let $(X, \cal S)$ be a compact set system with bounded VC-dimension and Helly number. Then the set system $(X, \cal S^\cap)$ generated by taking arbitrary intersections of members in $\cal S$ satisfies the weak $\varepsilon$-net property. In particular, the weak $\varepsilon$-net property holds with a function $g(x) = cx^{d\ln x}$ where $c$ and $d$ are constants depending only on the VC-dimension and the Helly number. 
\end{theorem}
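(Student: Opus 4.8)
The plan is to establish the weak $\varepsilon$-net property directly, in the form ``$\tau$ is bounded in terms of $\tau^*$'', by constructing small transversals. The first step is a reduction by linear programming duality. Suppose $F\subset\cal S^\cap$ is a finite family with $\tau^*(F)\le t$. Then there is a finitely supported $f:X\to[0,1]$ with $\sum_{x}f(x)\le t$ and $\sum_{x\in S}f(x)\ge 1$ for all $S\in F$ (any fractional transversal can be thinned to finite support), and after rescaling, $\mu\defeq f/t$ is a finitely supported sub-probability measure with $\mu(S)\ge 1/t$ for every $S\in F$. Hence it suffices to prove the following: \emph{for every $\varepsilon\in(0,1)$ and every finitely supported measure $\mu$ on $X$, the subfamily $\cal H_\varepsilon\defeq\{T\in\cal S^\cap:\mu(T)\ge\varepsilon\}$ admits a transversal of size at most $g(1/\varepsilon)$.} Indeed, a transversal $N=\{p_1,\dots,p_s\}$ of $\cal H_{1/t}$ then meets every member of $F$, and assigning each $S\in F$ to the least index $i$ with $p_i\in S$ partitions $F$ into $s\le g(t)$ intersecting subfamilies, so that $\tau(F)\le g(\tau^*(F))$.

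The second ingredient is the strong $\varepsilon$-net theorem, together with the classical fact that bounded-fold Boolean combinations preserve bounded VC-dimension: if the VC-dimension of $\cal S$ is $v$, then for each fixed $\ell$ the family $\cal S^{(\le\ell)}$ consisting of all intersections of at most $\ell$ members of $\cal S$ has VC-dimension $O(v\ell\log\ell)$, and hence admits, with respect to any finitely supported measure, strong $\delta$-nets of size polynomial in $1/\delta$ with degree bounded in terms of $v$ and $\ell$. The plan is to apply this with $\ell$ a constant multiple of the Helly number $h\defeq h(X,\cal S)$. The reason the Helly number must be brought in at all is that $\cal S^\cap$ itself can have unbounded VC-dimension even when $\cal S$ does not --- halfspaces versus polyhedra in the plane already show this --- so weak $\varepsilon$-nets for $\cal S^\cap$ cannot be obtained simply by quoting the $\varepsilon$-net theorem, and the Helly hypothesis has to stand in for the missing ``dimension'' of $\cal S^\cap$.

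The transversal of $\cal H_\varepsilon$ is then assembled in $O(\log(1/\varepsilon))$ rounds. Using compactness, write each $T\in\cal H_\varepsilon$ as $T=\bigcap\cal G_T$ with $\cal G_T\subset\cal S$ \emph{finite}. Starting from $\mu_1=\mu$, in round $i$ we adjoin to the current point set a strong $(\varepsilon/2)$-net $M_i$ for $\cal S^{(\le h)}$ with respect to a suitable conditional measure $\mu_i$; its size is polynomial in $1/\varepsilon$ with degree bounded in terms of $v$ and $h$. The crux is to show, using the Helly number $h$, that if some $T\in\cal H_\varepsilon$ is not met by $M_i$, then at most $h$ of the sets in $\cal G_T$ already carry all but a small fraction of the current mass of $T$; this forces the members of $\cal H_\varepsilon$ that survive round $i$ to concentrate on a controlled region, on which one conditions to form $\mu_{i+1}$, while an appropriately chosen monotone complexity parameter of the configuration strictly decreases --- so that after $O(\log(1/\varepsilon))$ rounds nothing heavy is left unmet. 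Multiplying the per-round bounds over the rounds then yields a transversal of size $(1/\varepsilon)^{O(\log(1/\varepsilon))}$, where the hidden constant depends only on $v$ and $h$; this has the claimed form $g(x)=cx^{d\ln x}$ with $c$ and $d$ depending only on the VC-dimension and the Helly number.

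I expect the ``Helly capture'' step of the previous paragraph to be the principal obstacle. With no ambient dimension or boundary structure at hand, the bounded Helly number must be turned by hand into a statement of the shape ``a heavy intersection that avoids a net for all $\le h$-fold intersections is essentially determined by $\le h$ of its defining sets'', and one has to identify the exact progress measure that makes the $O(\log(1/\varepsilon))$-round recursion terminate. Compactness enters essentially --- and, I believe, only --- in order to replace, at each step, an arbitrary heavy member of $\cal S^\cap$ by a finite defining subfamily. An alternative route would be to prove that $\cal S^\cap$, viewed as a convexity space, already satisfies the fractional Helly property with parameters controlled by $v$ and $h$, and then to invoke Theorem~\ref{t:frac-to-eps}; but establishing fractional Helly for $\cal S^\cap$ when its VC-dimension is unbounded appears to require essentially the same exploitation of the Helly number, and it would produce a polynomial rather than a quasi-polynomial bound.
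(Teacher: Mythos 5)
The paper does not actually prove this theorem; it cites Moran--Yehudayoff and records only that their argument ``relies on Haussler's packing lemma.'' So the comparison has to be against that proof, and against the internal soundness of your plan. Your opening reduction is fine: by LP duality a finite $F$ with $\tau^*(F)\le t$ yields a finitely supported measure $\mu$ with $\mu(S)\ge 1/t$ for all $S\in F$, and a point transversal of the $\mu$-heavy sets converts back into a partition of $F$ into intersecting subfamilies. The problem is that everything after that is a plan rather than a proof, and the one step you yourself flag as ``the principal obstacle'' --- the ``Helly capture'' claim --- is both unproven and, as stated, not something the Helly number delivers. The Helly number controls \emph{emptiness}: if $\cap\cal G_T\cap M=\emptyset$ for a finite point set $M$, the natural witness argument only produces a subfamily of $\cal G_T$ of size up to $|M|$ (one defining set per missed point), not of size $h$, and $M$ is exactly the quantity you are trying to bound. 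Nor does bounded Helly number say anything about \emph{measure}: an $h$-fold sub-intersection of $\cal G_T$ can have $\mu$-mass vastly exceeding $\mu(T)$, so a point of your strong net for $\cal S^{(\le h)}$ landing in such a sub-intersection need not lie in $T$. Consequently the claim that an unmet heavy $T$ is ``essentially determined by $\le h$ of its defining sets,'' and the existence of a strictly decreasing progress measure driving the $O(\log(1/\varepsilon))$ recursion, are precisely the content of the theorem and are missing from the proposal.

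The actual proof takes a genuinely different route at this point: instead of the strong $\varepsilon$-net theorem for $\cal S^{(\le h)}$, Moran and Yehudayoff use Haussler's packing lemma to extract, at each scale $\delta$, a small $\delta$-separated (in $\mu$-symmetric-difference) sub-collection of $\cal S$ of size polynomial in $1/\delta$ with exponent the VC-dimension; each defining set of $T$ is then replaced by a nearby packing representative, and the Helly number together with compactness is what controls the error incurred by passing to boundedly many representatives. The quasi-polynomial bound $g(x)=cx^{d\ln x}$ arises from recursing over $O(\log(1/\varepsilon))$ scales, each contributing a polynomial factor. Your intuition about where compactness enters (finitizing $\cal G_T$) and about why bounded VC-dimension of $\cal S$ alone cannot suffice for $\cal S^\cap$ is correct, but without the packing-lemma step (or a worked-out substitute for it) the argument does not close.
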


The proof of this result differs significantly from other proofs for weak $\varepsilon$-nets, and relies on Haussler's packing lemma \cite{haussler}. 

\smallskip

In the same paper, Moran and Yehudayoff apply Theorem \ref{t:VC-to-eps} to a special class of convexity spaces which satisfy an additional separation axiom. In a convexity space, a {\em halfspace}  is a convex set $H$ whose complement $X\setminus H$ is also convex. A convexity space $(X, \cal C)$ is {\em separable} if for every set $K\in \cal C$ and point $p\in X\setminus K$ there exists a halfspace $H$ such that $K\subset H$ and $p\notin H$. See \cite[section 3.4]{vandevel} for this and other separation axioms in convexity spaces. 

\smallskip

It is a known that a separable convexity space is generated by its system of halfspaces \cite[Corollary 3.9]{vandevel}. That is, if $\cal B \subset \cal C$ is the system of halfspaces of a separable convexity space $(X, \cal C)$, then $(X, \cal C) = (X, \cal B^\cap)$.
It is not hard to show that the set system $\cal B$ of halfspaces in a separable convexity space has VC-dimension strictly less than $r(X, \cal C)$ \cite[Claim A.2]{moran}. Applying Theorem \ref{t:VC-to-eps} to the system of halfspaces, and keeping in mind that $h(X, \cal C) < r(X, \cal C)$, implies that any separable convexity space with bounded Radon number satisfies the weak $\varepsilon$-net property \cite[Theorem 1.2(1)]{moran}.

\smallskip

Another result from their paper \cite[Theorem 1.2(2) and Lemma 1.13]{moran} draws a further connection between the weak $\varepsilon$-net property and the Radon number in general convexity spaces. 
By a clever reduction to the chromatic number of Kneser graphs, they obtain the following.

\begin{theorem}[Moran, Yehudayoff \cite{moran}] \label{t:eps-to-radon}
    Let $(X, \cal C)$ be a convexity space. If $(X, \cal C)$ satisfies the weak $\varepsilon$-net property, then $r(X, \cal C)$ is bounded. In particular, if $r(X, \cal C) > r$, then there is a finite family $F\subset \cal C$ such that $\tau^*(F) = 4$ and $\tau(F) > r/2$. 
\end{theorem}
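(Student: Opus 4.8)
The plan is to establish the contrapositive in a quantitative form: if $r(X,\cal C)$ is large, then there is a finite family $F\subset\cal C$ with $\tau^*(F)$ small (in fact equal to $4$) but $\tau(F)$ large, which violates the weak $\varepsilon$-net property (the function $g$ would have to satisfy $g(4) > r/2$ for every $r$, hence cannot exist). So fix $r$ and suppose $r(X,\cal C) > r$; by definition there is a set $Y\subset X$ with $|Y| = r$ (or larger, but we may take exactly $r$ points after noting $r(X,\cal C)$ is an integer threshold) that admits no Radon partition, i.e.\ for \emph{every} partition $Y = A\sqcup B$ we have $\conv A\cap\conv B = \emptyset$.

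First I would turn this Radon-freeness into a combinatorial structure on the index set $[r]$, identifying $Y$ with $[r]$. For each point $p\in X$ consider the "witness set" $W_p = \{\,i\in[r] : p\in\conv{Y_i}\,\}$ where $Y_i$ ranges over... more precisely, the key observation is: if $A\sqcup B = Y$ then no point lies in both $\conv A$ and $\conv B$, which by domain-finiteness (Remark \ref{r:closure}) means the family of convex hulls $\{\conv{Y\setminus\{i\}} : i\in[r]\}$ has a very restricted Helly-type behaviour. The cleanest route, and the one I expect the authors take, is to pass to the \textbf{Kneser graph}: let $F$ be the family of convex sets $K_S = \conv{S}$ indexed by subsets $S\subset[r]$ of size $\lceil r/2\rceil$ (or some balanced size). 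Radon-freeness says $K_S\cap K_T = \emptyset$ whenever $S$ and $T$ are \emph{disjoint}, because $S\sqcup T$ would then extend to a Radon partition of $Y$. Hence the nerve of $F$ — viewed as a graph on the $\binom{r}{\lceil r/2\rceil}$ balanced subsets, with an edge when two convex sets intersect — has \emph{no} edge between disjoint subsets, so it is a subgraph of the complement of the Kneser graph $K(r,\lceil r/2\rceil)$. Equivalently, a proper colouring of the intersection graph of $F$ (a partition into intersecting subfamilies, i.e.\ a transversal-style partition) induces a proper colouring of the Kneser graph. Therefore $\tau(F) \geq \chi(KG_{r,\lceil r/2\rceil})$, and by the Lovász–Kneser theorem this chromatic number is $r - 2\lceil r/2\rceil + 2$, which for a suitable choice of parameters (e.g.\ subsets of size $\lfloor r/3\rfloor$) can be made $\Omega(r)$, indeed $> r/2$ after optimizing the ratio.

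Next I would bound $\tau^*(F)$ from above by exhibiting a fractional transversal of small weight: assign to each point $y_i\in Y$ the weight $f(y_i) = 1/\lceil r/2\rceil$ (scaled appropriately so that every $S$ of the chosen size picks up total weight $\geq 1$), and assign weight $0$ elsewhere; then $\sum_i f(y_i)$ is bounded by a constant independent of $r$. A careful choice of the subset-size makes this constant equal to $4$ exactly (matching the statement): one wants $f$ supported on $Y$ with each size-$s$ subset of $Y$ collecting weight $\ge 1$, and $\tau^*(F)\le r\cdot(1/s)$, so choosing $s \approx r/4$ gives $\tau^*(F)\le 4$, while the Kneser bound with $s\approx r/4$ still gives $\tau(F)\ge \chi(KG_{r,s}) = r - 2s + 2 > r/2$. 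Combining the two bounds yields the family $F$ promised in the theorem, and since the weak $\varepsilon$-net property would force $\tau(F)\le g(4)$ for an absolute function $g$, letting $r\to\infty$ gives the contradiction that bounds $r(X,\cal C)$.

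The main obstacle — and the step I would be most careful about — is the first reduction: verifying that $K_S\cap K_T=\emptyset$ for disjoint $S,T$ genuinely produces a graph homomorphism \emph{from} the Kneser graph \emph{to} (the complement of) the intersection graph of $F$ in the direction needed to lower-bound $\tau(F)$, and that the balancing of the parameter $s$ can be done simultaneously to hit $\tau^*=4$ and $\tau > r/2$. One has to be slightly cautious that "$S\sqcup T$ extends to a Radon partition of $Y$" requires $S\cup T = Y$, which forces $2s = r$ and would only give $\tau^*\le 2$; to get the claimed $\tau^* = 4$ with room to spare one instead uses that $\conv S\cap\conv T\ne\emptyset$ with $S,T$ disjoint already yields a Radon partition of $S\cup T$ (a subset of $Y$, not all of $Y$), and Radon-freeness of $Y$ is not directly what fails — rather one applies Radon-freeness to the subset $S\cup T$ only if $|S\cup T|\ge r$, i.e.\ if $2s\ge r$. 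This tension is exactly why the theorem states $\tau^*(F)=4$ rather than $2$: the honest argument takes $s = \lceil r/2\rceil$ forcing disjoint pairs to cover $Y$, but then to lower $\tau^*$ below $2$... In fact the resolution is that the statement's "$\tau^*(F)=4$" comes from the Moran–Yehudayoff reduction using the \emph{fractional} chromatic-number side of Kneser graphs ($\chi_f(KG_{r,s}) = r/s$), and I would need to track their Lemma 1.13 precisely here; that bookkeeping, rather than any deep idea, is the crux.
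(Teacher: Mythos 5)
First, note that the paper does not prove this theorem itself; it is quoted from Moran--Yehudayoff and the text only records that the proof is ``a clever reduction to the chromatic number of Kneser graphs.'' Your proposal reconstructs exactly that reduction, with the right family ($F=\{\conv{S} : S\in\binom{Y}{s}\}$ for a Radon-partition-free set $Y$ of size $r$ and $s\approx r/4$), the right fractional transversal (uniform weight $1/s$ on $Y$, giving $\tau^*(F)\le r/s=4$), and the right lower bound $\tau(F)\ge\chi(KG_{r,s})=r-2s+2>r/2$ via Lov\'asz's theorem. So the approach is the intended one.

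However, your final paragraph leaves the one step that actually carries the argument unresolved, and the resolutions you float there are wrong. You worry that $\conv{S}\cap\conv{T}=\emptyset$ for \emph{disjoint} $S,T\subset Y$ can only be extracted from Radon-freeness when $S\cup T=Y$ (forcing $2s=r$), or alternatively that one must pass to fractional chromatic numbers. Neither is needed: Radon-freeness of $Y$ is hereditary in exactly the direction you want, because the hull operator is monotone. If $S,T\subset Y$ are disjoint and $\conv{S}\cap\conv{T}\ne\emptyset$, set $A=S\cup\bigl(Y\setminus(S\cup T)\bigr)$ and $B=T$; then $A\cup B$ is a genuine partition of $Y$ and $\conv{A}\cap\conv{B}\supseteq\conv{S}\cap\conv{T}\ne\emptyset$, so $Y$ would admit a Radon partition. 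Hence \emph{every} pair of disjoint $s$-subsets has disjoint hulls, for any $s$, and a partition of $F$ into intersecting subfamilies properly colors $KG_{r,s}$. With $s=\lceil r/4\rceil$ this simultaneously gives $\tau^*(F)\le 4$ and $\tau(F)\ge r-2s+2>r/2$, which is the whole theorem (for the stated equality $\tau^*(F)=4$ one can add the matching lower bound: by the same disjointness observation the sets $\{S : x\in\conv{S}\}$ are intersecting families, so Erd\H os--Ko--Rado bounds their size by $\binom{r-1}{s-1}$ and the uniform fractional matching has value $r/s=4$). Once this is inserted, your argument is complete; as written, the proposal stops short at precisely this point and proposes detours that do not work.
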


The discussion would not be complete without clarifying how the invariants above are related to Alon and Kleitman's $(p,q)$-theorem.

\begin{definition}
Let $p\geq q$ be integers. A family of sets  has the {\em $(p,q)$-property} if among any $p$ members of the family there are some $q$ of them that intersect. We say that a
 set system $(X, \cal S)$ {\em admits a $(p,q)$-theorem} if there exists an integer $N = N(p,q)$ such that every finite family $F\subset \cal S$ which satisfies the $(p,q)$-property has transversal number $\tau(F)\leq N$.
\end{definition}

Alon and Kleitman \cite{pq-alon} showed that the standard convexity on $\mathbb{R}^d$ 
admits a $(p,q)$-theorem for all $p\geq q \geq d+1$. B{\'a}r{\'a}ny and Matou{\v s}ek \cite{lattice} showed that this is also true for the lattice convexity on $\mathbb{Z}^d$, that is, the lattice convexity on $\mathbb{Z}^d$ admits a $(p,q)$-theorem for all $p\geq q \geq d+1$. It is important to note that their bound on the transversal number is  different from the one for the standard convexity on $\mathbb{R}^d$. See also \cite[Section 6]{eckhoff-pq} for a discussion of $(p,q)$-theorems for abstract nerve complexes.

\smallskip

More generally, the proof method of Alon and Kleitman reveals the following (see e.g. \cite[Theorem 8]{akmm}).

\begin{theorem} \label{t:gen-pq}
Suppose the set system $(X, \cal S)$ satisfies the fractional Helly property for $k$-tuples, and let $p\geq q \geq k$. 
Then any finite family $F \subset \cal S$ with the $(p,q)$-property satisfies $\tau^*(F)\leq T$, where $T$ is a constant that depends only on $p$, $q$, and the function $\beta(\alpha)$ appearing in the fractional Helly property.  
\end{theorem}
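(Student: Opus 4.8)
The plan is to reproduce the Alon--Kleitman proof of the $(p,q)$-theorem \cite{pq-alon} in the purely combinatorial form isolated by Alon, Kalai, Matou{\v s}ek and Meshulam \cite{akmm}, in which the geometry enters only through the fractional Helly property. Fix a finite family $F \subset \cal S$ with the $(p,q)$-property, write $\tau^* = \tau^*(F)$ and $n = |F|$, let $\beta$ be the function witnessing the fractional Helly property for $k$-tuples (which we may take non-decreasing), and assume $k \leq q$; since $\tau^*(F) \leq n < p$ when $n < p$, we may also assume $n \geq p$. The whole point is to make the bound on $\tau^*$ independent of $n$: a naive iteration --- use the fractional Helly and $(p,q)$-properties to find a point lying in a constant fraction of the family, delete its star, and repeat --- only gives $\tau(F) = O(\log n)$.

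To remove the dependence on $n$ I would first pass to a weighted picture via linear programming duality. The program computing $\tau^*(F)$ is finite-dimensional (one may collapse points with the same incidence pattern in $F$), and its dual says that $\tau^*$ equals the maximum of $\sum_{S \in F} w_S$ over $w : F \to \mathbb{R}_{\geq 0}$ satisfying $\sum_{S \ni x} w_S \leq 1$ for all $x \in X$. Clearing the denominators of an optimal rational $w$ yields an integer $M$ --- which we may take as large as we please --- and a multiset $F^*$ consisting of $M w_S$ copies of each $S \in F$, so that $|F^*| = M\tau^*$ while every point of $X$ lies in at most $M$ members of $F^*$; in particular $F^*$ has no intersecting sub-multiset of size exceeding $M = |F^*|/\tau^*$.

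Next, set $P = p(k-1)$ and $\alpha = 1/\binom{P}{k}$, suppose for contradiction that $\tau^* > 1/\beta(\alpha)$, and choose $M$ large enough that $|F^*| \geq P$. Then $\beta(\alpha)|F^*| > M$, so if $F^*$ had at least $\alpha\binom{|F^*|}{k}$ intersecting $k$-tuples it would (by the fractional Helly property for $k$-tuples, applied to $F^*$ in its straightforward extension to multisets) contain an intersecting sub-multiset of size $\geq \beta(\alpha)|F^*| > M$, which is impossible; hence fewer than $\alpha\binom{|F^*|}{k}$ of the $k$-element sub-multisets of $F^*$ are intersecting, i.e.\ more than $\bigl(1 - 1/\binom{P}{k}\bigr)\binom{|F^*|}{k}$ of them are non-intersecting. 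Since a $k$-uniform hypergraph on $N \geq P$ vertices with more than $\bigl(1 - 1/\binom{P}{k}\bigr)\binom{N}{k}$ edges contains a clique $K_P^{(k)}$ (averaging over $P$-subsets: a random one has in expectation fewer than one non-edge), we obtain $P = p(k-1)$ members of $F^*$ every $k$ of which is non-intersecting. As any $k$ copies of a single set are intersecting, no set of $\cal S$ occurs $k$ or more times among these $P$ copies, so they represent at least $p$ distinct members of $F$; applying the $(p,q)$-property to $p$ of these yields $q$ members with a common point, and as $q \geq k$, any $k$ of the corresponding copies form an intersecting $k$-tuple within our configuration --- a contradiction. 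We conclude that $\tau^*(F) \leq \max\bigl\{1,\; 1/\beta\bigl(1/\binom{p(k-1)}{k}\bigr)\bigr\}$, which depends only on $p$, $q$ (recall $k \leq q$) and $\beta$.

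The step I expect to demand the most care is the passage to the multiset $F^*$ together with the use of the fractional Helly property there: one must verify that this property, stated for repetition-free families, persists when some members of the family coincide. In the standard convexity on $\mathbb{R}^d$ this is the usual harmless perturbation of the sets; in a general convexity space it reduces to the observation that the fractional Helly property is a statement about the nerve complex and is preserved under balanced blow-ups (each vertex replaced by a clique of copies), which should be checked but is routine. The remaining ingredients --- LP duality, the first-moment hypergraph Tur\'an bound $\pi\bigl(K_P^{(k)}\bigr) \leq 1 - 1/\binom{P}{k}$ (which needs no ``$N$ large'' hypothesis), and the arithmetic behind the choice of $\alpha$ and $P$ --- are routine bookkeeping.
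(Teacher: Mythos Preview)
Your proof is correct and follows precisely the Alon--Kleitman argument as abstracted in \cite{akmm}, which is exactly what the paper invokes: the paper does not give its own proof of Theorem~\ref{t:gen-pq} but simply attributes it to \cite[Theorem~8]{akmm}, so there is nothing to compare beyond noting that you have reproduced the intended argument faithfully (LP duality to pass to the weighted multiset $F^*$, the averaging/Tur\'an step to locate $p(k-1)$ members with no intersecting $k$-tuple, and the pigeonhole to extract $p$ distinct sets contradicting the $(p,q)$-property).

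One remark on the point you yourself flag. Your claim that the fractional Helly property ``is preserved under balanced blow-ups'' is not literally true for an arbitrary function $\beta$ witnessing the property for repetition-free families: take $\cal S=\{A,B\}$ with $A\cap B=\emptyset$, where the set version is vacuous and admits any $\beta$, while the multiset version forces $\beta(1/2)\le 1/2$. What is true --- and what suffices --- is that every known source of the fractional Helly property (colorful Helly via Theorem~\ref{t:col-to-frac}, bounded VC-dimension, the Katchalski--Liu argument) yields a $\beta$ that works uniformly for indexed families with repetitions; equivalently, one may from the outset read ``finite family $F\subset\cal S$'' in the definition as allowing multiplicities, which is the convention implicitly used in \cite{akmm} and \cite{pq-alon}. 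With that reading your argument goes through verbatim.
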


It follows that any convexity space $(X, \cal C)$ with the fractional Helly property for $k$-tuples, also admits a $(p,q)$-theorem for all $p\geq q \geq k$. This is one of the main result of \cite[Theorems 8 and 9]{akmm}. By Theorem \ref{t:gen-pq} we get an absolute bound on the fractional transversal number, which in turn implies an absolute bound on the transversal number by Theorem \ref{t:frac-to-eps}.

\smallskip

It is worth pointing out that a $(p,q)$ theorem does not imply a universal bound on the fractional Helly number. For instance, K{\'a}rolyi \cite{karoly} proved that for the box convexity on $\mathbb{R}^d$ there is a $(p,2)$ theorem for every $p\geq 2$ and $d\geq 1$, but in this case the fractional Helly number equals $d+1$.

\subsection{Summary}
Here is a brief summary of the results discussed in this section.

\begin{theorem}\label{t:equivalences}
The following are equivalent for any convexity space $(X, \cal C)$.
\begin{enumerate}
    \item $(X, \cal C)$ has a bounded Radon number.
\smallskip
    \item $(X, \cal C)$ has the colorful Helly property.
\smallskip
    \item $(X, \cal C)$ has the fractional Helly property.
\smallskip
\item $(X, \cal C)$ has the weak $\varepsilon$-net property.
\end{enumerate}
\end{theorem}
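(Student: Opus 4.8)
The plan is to prove Theorem~\ref{t:equivalences} by assembling the implications already collected in this section into a cycle, so that all four properties become equivalent. Concretely, I would establish the chain
\[
(1) \implies (2) \implies (3) \implies (4) \implies (1).
\]
Each single arrow is a theorem quoted above, applied in the special case where the set system $(X,\cal S)$ is the convexity space $(X,\cal C)$ itself. The only real content of the proof is to check that each quoted result applies verbatim and that the quantitative dependencies chain together without circularity.

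First, $(1)\implies(2)$ is precisely Theorem~\ref{t:radon-to-colorful} of Holmsen and Lee: a bounded Radon number $r$ forces the colorful Helly property with colorful Helly number $h_c(X,\cal C)\le m(r)$. Next, $(2)\implies(3)$ is Theorem~\ref{t:col-to-frac}: if $h_c(X,\cal C)=m$ then $(X,\cal C)$ satisfies the fractional Helly property for $m$-tuples with $\beta(\alpha)=\Omega(\alpha^{m^m})$, so in particular $h_f(X,\cal C)\le m$ is finite. Then $(3)\implies(4)$ is Theorem~\ref{t:frac-to-eps} of Alon, Kalai, Matou\v sek, and Meshulam, which is stated exactly for convexity spaces satisfying the fractional Helly property and yields the weak $\varepsilon$-net property with $g(x)=ax^b$. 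Finally, $(4)\implies(1)$ is Theorem~\ref{t:eps-to-radon} of Moran and Yehudayoff: a convexity space with the weak $\varepsilon$-net property has bounded Radon number, since otherwise one produces families $F$ with $\tau^*(F)=4$ but $\tau(F)$ arbitrarily large, contradicting the existence of the bounding function $g$.

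The main thing to be careful about — and where I would spend most of the write-up — is the interface conditions on each implication. Theorem~\ref{t:col-to-frac} and the colorful/fractional definitions are stated for arbitrary set systems, so no extra hypothesis is needed there; but Theorems~\ref{t:frac-to-eps} and~\ref{t:eps-to-radon} are stated specifically for convexity spaces, which is exactly our setting, so this is fine. One genuine subtlety is that the definition of the weak $\varepsilon$-net property involves the transversal and fractional transversal numbers, and Theorem~\ref{t:frac-to-eps} produces a bound of the form $\tau(F)\le g(\tau^*(F))$ — I should make sure the function $g$ delivered there is of the type required by the definition (a function $\mathbb{Q}\to\mathbb{N}$), which it is since $a x^b$ is. I expect no obstacle beyond this bookkeeping: the theorem is, as the text says, an \emph{accumulation} of prior work, and the proof is the observation that these four results close up into a loop. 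I would close by remarking that the constants compound badly (the Radon number $r$ controls $m(r)\approx r^{r^{\log r}}$, which then feeds into $\beta$ and hence into $g$), so although the properties are qualitatively equivalent, the quantitative equivalences are far from tight — consistent with the open problems posed earlier in the section.
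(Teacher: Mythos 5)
Your proposal is correct and follows exactly the paper's own argument: the proof is the cyclic chain $(1)\Rightarrow(2)\Rightarrow(3)\Rightarrow(4)\Rightarrow(1)$ obtained by citing Theorems \ref{t:radon-to-colorful}, \ref{t:col-to-frac}, \ref{t:frac-to-eps}, and \ref{t:eps-to-radon} in that order. The additional bookkeeping you describe about interface conditions is sensible but not needed beyond what the paper records.
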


\begin{proof}
(1) $\implies$ (2)  is Theorem \ref{t:radon-to-colorful}. (2) $\implies$ (3) is Theorem \ref{t:col-to-frac}. (3) $\implies$ (4) is Theorem \ref{t:frac-to-eps}. (4) $\implies$ (1) is Theorem \ref{t:eps-to-radon}.
\end{proof}

\begin{corollary}\label{c:(p,q)}
Let $(X, \cal C)$ be a convexity space with Radon number  $r(X, \cal C)\leq r$. Then $(X, \cal C)$ admits a $(p,q)$ theorem for all $p\geq q \geq h_f(X, \cal C)$, where the bound on the transversal number depends only on $p$, $q$, and $r$. 
\end{corollary}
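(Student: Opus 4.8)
The plan is to chain together the four theorems assembled in this section. Fix integers $p \geq q \geq h_f(X, \cal C)$ and a finite family $F \subset \cal C$ having the $(p,q)$-property; the goal is a bound $\tau(F) \leq N$ in which $N$ depends only on $p$, $q$, and $r$, uniformly over all convexity spaces with Radon number at most $r$.

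The first step I would carry out is to convert the Radon hypothesis into a \emph{quantitatively controlled} fractional Helly statement. By Theorem \ref{t:radon-to-colorful} the space has the colorful Helly property with $h_c(X, \cal C) \leq m(r)$, and by Theorem \ref{t:col-to-frac} it then satisfies the fractional Helly property for $h_c$-tuples with a function $\beta(\alpha) = \Omega(\alpha^{h_c^{h_c}})$; since $h_c \leq m(r)$, both $h_f(X, \cal C) \leq h_c(X, \cal C) \leq m(r)$ and the function $\beta$ are bounded in terms of $r$ alone. I would also record the routine fact that the fractional Helly property is monotone in the tuple size — if it holds for $k$-tuples with function $\beta$, it holds for all $k' \geq k$ with the same $\beta$ (double-count the incidences between intersecting $k'$-tuples and the intersecting $k$-subtuples they contain, via $\binom{n}{k'}\binom{k'}{k} = \binom{n}{k}\binom{n-k}{k'-k}$) — so that we may assume the fractional Helly property holds at level $q$ (using $q \geq h_f$) with a function controlled by $r$.

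Next I would run the Alon--Kleitman argument as packaged in this section. Applying Theorem \ref{t:gen-pq} to $F$ with tuple size $q$ (legitimate since $p \geq q$) yields $\tau^*(F) \leq T$, where $T$ depends only on $p$, $q$, and the fractional Helly function, hence only on $p$, $q$, $r$. To pass from the fractional transversal number back to the integer one, I would invoke Theorem \ref{t:frac-to-eps}: since $(X, \cal C)$ has the fractional Helly property, it has the weak $\varepsilon$-net property with a function $g(x) = a x^b$ whose constants $a, b$ depend only on $h_f(X, \cal C)$ and $\beta$, hence only on $r$. Therefore $\tau(F) \leq g(\tau^*(F)) \leq g(T)$, and setting $N = g(T)$ gives the asserted $(p,q)$-theorem. (Qualitatively one could simply read the statement off Theorem \ref{t:equivalences} together with Theorems \ref{t:gen-pq} and \ref{t:frac-to-eps}, but then the $r$-dependence of $N$ must be tracked by hand.)

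The step I expect to require the most care is not any single implication but the bookkeeping of the quantitative dependencies: one must ensure that the fractional Helly function actually used at level $q$ is bounded below by a function of $r$ alone, rather than by something that varies with the particular space. This is why I would route the argument through the colorful Helly number $h_c \leq m(r)$ and the explicit $\beta$ supplied by Theorem \ref{t:col-to-frac}, together with the monotonicity remark — cleanest when $q$ is at least $h_c$, and reduced to that case otherwise — instead of appealing to an unquantified fractional Helly property. Once the constants are pinned down this way, each remaining step is a black-box application of a theorem stated above.
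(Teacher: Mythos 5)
Your proposal is correct and follows essentially the same route as the paper: Theorem \ref{t:radon-to-colorful} and Theorem \ref{t:col-to-frac} supply a fractional Helly function $\beta$ controlled by $r$ alone, Theorem \ref{t:gen-pq} then bounds $\tau^*(F)$, and Theorem \ref{t:frac-to-eps} converts this into a bound on $\tau(F)$. The one delicate point you flag --- that the quantitatively controlled $\beta$ lives at tuple size $h_c \leq m(r)$ while the corollary allows $q$ as small as $h_f$, so the range $h_f \leq q < h_c$ requires a separate reduction --- is genuine, but the paper's own two-line proof passes over it just as quickly, so your treatment is if anything more careful than the original.
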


\begin{proof}
If the Radon number is bounded, then Theorem \ref{t:equivalences} guarantees that all the conditions of Theorem \ref{t:gen-pq} are satisfied. Moreover, by Theorems \ref{t:radon-to-colorful} and \ref{t:col-to-frac}, $(X, \cal C)$ satisfies the fractional Helly property with a function $\beta(\alpha)$ that  depends only  on the Radon number. 
\end{proof}

\medskip

The following table gives the values of the main parameters we have discussed for some specific convexity spaces. Note that the Radon number for the lattice convexity on $\mathbb{R}^d$ is not known, so we just state the upper and lower bounds. 

\bigskip

\renewcommand{\arraystretch}{1.7}
\begin{tabular}{|c|c|c|c|c|}
\hline
$(X, \cal C)$ & $r(X, \cal C)$ & $h(X, \cal C)$ & $h_c(X, \cal C)$ & $h_f(X, \cal C)$ \\ \hline 
standard convexity on $\mathbb{R}^d$ & $d+2$ & $d+1$ & $d+1$ & $d+1$ \\ \hline
box convexity on $\mathbb{R}^d$ & $\Theta(\log d)$ & $2$ & $d+1$ & $d+1$ \\ \hline
lattice convexity on $\mathbb{Z}^d$ & $\Omega(2^d), O(d2^d)$ & $2^d$ & $2^d$ & $d+1$ \\ \hline
\end{tabular}

\bigskip

\noindent
For a general convexity space with Radon number $r = r(X, \cal C)$ the following bounds hold.
\[\begin{array}{ccc}
    h(X, \cal C) & \leq  & r-1, \\
    h(X, \cal C) & \leq  & h_c(X, \cal C), \\
    h_f(X, \cal C) & \leq & h_c(X, \cal C), \\    
    h_c(X, \cal C) & \leq & r^{r^{\log r}}, \\ 
    t_k(X, \cal C) & \leq  & r^{r^{r^{\log r}}} \cdot k.

\end{array}\]

\section{Convexity spaces arising from uniform hypergraphs} 

In this section we apply the theory of convexity spaces to colorings of uniform hypergraphs. Let $H = (V, E)$ be a finite $k$-uniform hypergraph with vertex set $V$ and edge set $E \subset \binom{V}{k}$. In order to avoid certain degenerate situations, we assume throughout this section that $H$ is {\em nonempty}, meaning that $H$ contains at least one edge. 

\smallskip

The following standard terminology for hypergraphs will be used throughout this section. An {\em independent set} in $H$ is a subset of vertices which does not contain any edge of $H$. A {\em proper coloring} of $H$ is a partition of the vertex set $V = V_1 \cup V_2 \cup \cdots \cup V_k$ where each $V_i$ is an independent sets. The {\em chromatic number} $\chi(H)$ is the smallest number of parts in a proper coloring of $H$.  
A {\em clique} in $H$ is a subset of vertices in which every $k$-tuple is an edge of $H$, and the {\em clique number} $\omega(H)$ is the maximum number of vertices in a clique in $H$. A class of hypergraphs $\cal H$ is {\em $\chi$-bounded} if there exists a function $f: \mathbb{N} \to \mathbb{N}$ such that $\chi(H) \leq f(\omega(H))$ for every $H\in {\cal H}$.

\subsection{The associated convexity space}
Here we give a canonical construction of a convexity space derived from a uniform hypergraph $H = (V, E)$. For graphs, this construction appeared implicitly in \cite[Section 7.2]{akmm} and was also discussed in \cite{lssx}.

Let $X$ be the family of {\em maximal} independent sets in $H$, and for a subset $W \subset V$ define the set 
\[\es W = \{\sigma \in X : W\subset \sigma\}.\]
Observe that $\es\emptyset = X$, $\es V = \emptyset$ (since $H$ is nonempty), and $\es A \cap \es B = \es {A\cup B}$. Next, define the set system ${\cal C} \subset 2^{X}$ by setting
\[ {\cal C} = \{ \es W : W\subset V \}.\]
If follows from the observations above that $(X, {\cal C})$ is a convexity space, which we call the {\em associated convexity space} of $H$ and denote by $(X, \cal C)_H$. Note that this is a particular instance of the convexity space given in Example \ref{ex:spaces}\eqref{ex:dual}.

\smallskip

The convex hull operator of the associated convexity space has a particularly simple characterization.

\begin{lemma}\label{l:convexhulls}
    Let $H$ be a uniform hypergraph and $(X,  {\cal C})_H$ its associated convexity space. For a subset $A \subset X$ we have $\conv A = \es {\cap A}$.
\end{lemma}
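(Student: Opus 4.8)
The plan is to prove the two inclusions $\conv A \subseteq \es{\cap A}$ and $\es{\cap A} \subseteq \conv A$ separately, using the characterization of $\conv A$ as the smallest convex set containing $A$ together with the three elementary identities noted just before the lemma, namely $\es\emptyset = X$, $\es V = \emptyset$, and $\es A \cap \es B = \es{A \cup B}$.

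For the first inclusion, I would first observe that $\es{\cap A}$ is convex (it is of the form $\es W$ with $W = \cap A \subseteq V$), so it suffices to show $A \subseteq \es{\cap A}$. But for each $\sigma \in A$, every element of $A$ is a maximal independent set containing... wait — the relevant point is that $\cap A = \bigcap_{\sigma \in A} \sigma$ is a subset of each $\sigma \in A$, hence $\cap A \subseteq \sigma$, which by definition means $\sigma \in \es{\cap A}$. Thus $A \subseteq \es{\cap A}$, and since $\conv A$ is the smallest convex set containing $A$, we get $\conv A \subseteq \es{\cap A}$.

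For the reverse inclusion, I would use the fact that $\conv A$, being convex, equals $\es W$ for some $W \subseteq V$; the goal is then to show $\es W \subseteq \es{\cap A}$, for which it is enough (by the contravariant behavior of $\es{\cdot}$) to show $\cap A \subseteq W$. Since $A \subseteq \conv A = \es W$, every $\sigma \in A$ satisfies $W \subseteq \sigma$; therefore $W \subseteq \bigcap_{\sigma \in A}\sigma = \cap A$. Hmm — this gives $W \subseteq \cap A$, which yields $\es{\cap A} \subseteq \es W = \conv A$, exactly the inclusion I want. Combining the two inclusions gives $\conv A = \es{\cap A}$.

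The only genuine subtlety, and the step I would be most careful about, is the degenerate case $A = \emptyset$: then $\cap A$ should be interpreted as $\cap\emptyset = V$ (the intersection over the empty family, taken inside the ground set $V$), giving $\es{\cap A} = \es V = \emptyset = \conv\emptyset$, which is consistent; one should state this convention explicitly. A secondary point worth checking is that the identity $\es A \cap \es B = \es{A\cup B}$ extends to arbitrary (possibly infinite) intersections, so that $\bigcap_{\sigma\in A}\es\sigma = \es{\cup_{\sigma\in A}\sigma}$ if one wants to phrase the argument that way — though the proof above avoids needing this by working directly with the defining property of the convex hull. Everything else is a direct unwinding of the definitions of $\es{\cdot}$ and of $\conv{\cdot}$, so no real calculation is involved.
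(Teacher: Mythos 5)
Your proof is correct and follows essentially the same route as the paper's: both rest on the observation that $\sigma \in \es{W}$ if and only if $W \subset \sigma$ (hence $A \subset \es{W}$ if and only if $W \subset \cap A$) together with the anti-monotonicity $U \subset W \Rightarrow \es{W} \subset \es{U}$; you merely unpack this into two explicit inclusions where the paper compresses it into one line. Your remarks on the case $A = \emptyset$ and on infinite intersections are sensible but not needed beyond what you already argue.
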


\begin{proof}
It follows from the definition of the associated convexity space that  $\sigma \in \es W$ if and only if $W\subset \sigma$. Therefore $A \subset \es W$ if and only if $W \subset \cap A$. The claim now follows from the fact that $U\subset W$ implies $\es W \subset \es U$.
\end{proof}

Certain key properties of a hypergraph and its associated convexity space are related by the following.

\begin{lemma}\label{l:dictionary}
Let $H = (V, E)$ be a $k$-uniform hypergraph with associated convexity space $(X, {\cal C})_H$. For the family  $F = \{\es {\{v\}}  : v\in V\} \subset {\cal C}$ we have $\tau(F) = \chi(H)$.
\end{lemma}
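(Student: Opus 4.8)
The plan is to show that a proper coloring of $H$ with $t$ colors corresponds precisely to a partition of $F$ into $t$ intersecting subfamilies, and vice versa. The key dictionary is Lemma \ref{l:convexhulls} together with the observation that a subfamily $F' \subset F$ is intersecting exactly when the corresponding vertex set is independent in $H$. First I would record this observation: for $F' = \{\es{\{v\}} : v \in W\}$ with $W \subset V$, we have $\cap F' = \es W$, and by the definition of the associated convexity space $\es W \neq \emptyset$ if and only if some maximal independent set contains $W$, which (since every independent set extends to a maximal one) holds if and only if $W$ itself is independent in $H$.

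Given this, the two inequalities follow by translating partitions back and forth. For $\tau(F) \le \chi(H)$: take a proper coloring $V = V_1 \cup \cdots \cup V_t$ with $t = \chi(H)$; each $V_i$ is independent, so each $F_i = \{\es{\{v\}} : v \in V_i\}$ is intersecting by the observation, and $F = F_1 \cup \cdots \cup F_t$ is a partition (the map $v \mapsto \es{\{v\}}$ need not be injective, but one can still group the sets $\es{\{v\}}$ according to the color of $v$, choosing one color arbitrarily if two vertices give the same set — this only helps). Conversely, for $\chi(H) \le \tau(F)$: take a partition $F = F_1 \cup \cdots \cup F_t$ into intersecting subfamilies with $t = \tau(F)$; pull back to get $V = W_1 \cup \cdots \cup W_t$ where $W_i = \{v : \es{\{v\}} \in F_i\}$, and each $W_i$ is independent by the observation, so this is a proper coloring and $\chi(H) \le t$.

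The one genuinely delicate point — and the main obstacle — is the possible non-injectivity of $v \mapsto \es{\{v\}}$: two distinct vertices $u,v$ can lie in exactly the same maximal independent sets, in which case $\es{\{u\}} = \es{\{v\}}$, so $F$ has fewer than $|V|$ members. This does not affect $\tau(F) \le \chi(H)$ (grouping by color still works), but for the reverse direction one must be careful that assigning $v$ the index $i$ with $\es{\{v\}} \in F_i$ is well-defined and yields independent classes. I would handle this by noting that if $\es{\{u\}} = \es{\{v\}}$ then $u,v$ are "twins" and $\{u,v\}$ is automatically independent (if $\{u,v\}$ were an edge then no independent set, hence no maximal one, contains both, contradicting $\es{\{u\}} = \es{\{v\}} \neq \emptyset$ — here we use $X \neq \emptyset$, which holds since $V$ admits at least one maximal independent set); so placing twins in the same class is consistent with properness. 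With this caveat dispatched the rest is routine bookkeeping, so I would state the twin observation as a short remark and then give the two-inequality argument above.
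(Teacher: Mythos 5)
Your proof is correct and follows essentially the same route as the paper's: both rest on the correspondence between intersecting subfamilies $\{\es{\{v\}} : v\in W\}$ and independent sets $W$ (the paper phrases this as identifying a transversal with a cover of $V$ by maximal independent sets, yours as translating partitions of $F$ into proper colorings). Your extra care about the non-injectivity of $v\mapsto \es{\{v\}}$ is fine but the twin digression is unnecessary, since your own observation that $\cap F_i = \es{W_i}\neq\emptyset$ already shows each color class $W_i$ is independent.
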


\begin{proof}
Observe that a transversal for $F$ is equivalent to a collection of maximal independent sets $\sigma_1, \sigma_2, \dots, \sigma_m \in X$ such that $V\subset \sigma_1\cup \sigma_2 \cup \cdots \cup \sigma_m$. Since any partition of $V$ into independent sets can be extended to a covering of $V$ by maximal independent sets  it follows that $\tau(F) = \chi(H)$.
\end{proof}

\medskip

\subsection{The hypergraph class 
\texorpdfstring{${\cal H}_k(m)$}{}}
The construction of the associated convexity space does not actually use the uniformity of the hypergraph, and could be extended to any hypergraph. However, it is the specific application we now describe that will require the hypergraphs to be uniform.

\begin{definition} \label{d:tkm} For integers $m\geq k \geq 2$, let $H= (V, E)$ be a $k$-uniform hypergraph with at least $m$ edges. We say that $H$  has {\em property $T_k(m)$} if among every $m$ (not necessarily distinct) edges there exists $k$ of them that have a system of distinct representatives (SDR) which is an edge in $H$. In other words, for any $m$ edges $e_1, e_2, \dots, e_m \subset E$ there exists
indices $1\leq i_1 < i_2   < \cdots < i_k\leq m$ and
distinct vertices $v_1\in e_{i_1}, v_2 \in e_{i_2}, \dots, v_k\in e_{i_k}$ such that the $k$-tuple $\{v_1,v_2,\dots, v_k\} \in E$.

\smallskip

For integers $m\geq k \geq 2$ let $\cal{H}_k(m)$ denote the class of all $k$-uniform hypergraphs that satisfy property $T_k(m)$. 
\end{definition}

It is instructive to understand the graph class ${\cal H}_2(m)$. Suppose $G\in \cal{H}_2(m)$ and consider $m$ edges $e_1, e_2, \dots, e_m$ of $G$. If two of these edges share a vertex, say $e_{i_1} = \{u,v\}$ and $e_{i_2} = \{u,w\}$, then the SDR condition of  property $T_2(m)$ is automatically satisfied by setting $v_1 = u$ and $v_2 = w$. Thus property $T_2(m)$ only imposes a restriction when the edges $e_1, e_2, \dots, e_m$ are pairwise disjoint, that is, when they form a matching. Consequently,  the graph class $\cal{H}_2(m)$ consists of all graphs with no {\em induced matching} of size $m$. 

\smallskip

The main result of this section is the following.

\begin{theorem} \label{t:chi-bounded}
For every $m\geq k \geq 2$, the hypergraph class ${\cal H}_k(m)$ is $\chi$-bounded. 
\end{theorem}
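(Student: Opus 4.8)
The plan is to pass to the associated convexity space $(X,{\cal C})_H$, translate property $T_k(m)$ into a bound on its colorful Helly number, and then feed this into the machinery of Section~2. Once $(X,{\cal C})_H$ admits a $(p,q)$-theorem, a Ramsey argument converts the hypothesis $\omega(H)\le\omega$ into the $(p,q)$-property for the family $F=\{\es{\{v\}}:v\in V\}$, and Lemma~\ref{l:dictionary} then bounds $\chi(H)=\tau(F)$.

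\emph{The key step} is to prove that if $H\in{\cal H}_k(m)$ then $h_c\big((X,{\cal C})_H\big)\le m$. Suppose the colorful Helly property fails for the parameter $m$: there are finite families $F_1,\dots,F_m\subset{\cal C}$ with every colorful intersection nonempty but no $F_i$ intersecting. Writing each $K\in F_i$ canonically as $K=\es{\cap K}$ (Lemma~\ref{l:convexhulls}, since $K=\conv K$) and using $\es A\cap \es B=\es{A\cup B}$, the relation $\cap F_i=\emptyset$ says exactly that $U_i:=\bigcup_{K\in F_i}(\cap K)$ is \emph{not} an independent set of $H$, so it contains an edge $e_i\in E$; for each $v\in e_i$ fix $K^{(i)}_v\in F_i$ with $v\in\cap K^{(i)}_v$. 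Now apply property $T_k(m)$ to the $m$ edges $e_1,\dots,e_m$: there are indices $i_1<\dots<i_k$ and distinct vertices $v_t\in e_{i_t}$ with $\{v_1,\dots,v_k\}\in E$. Make a colorful selection taking $K^{(i_t)}_{v_t}\in F_{i_t}$ for $t=1,\dots,k$ and arbitrary members of the remaining $F_i$; the colorful condition produces a maximal independent set $\sigma$ in the intersection, and since $\sigma\in K^{(i_t)}_{v_t}$ while $v_t\in\cap K^{(i_t)}_{v_t}$, we get $v_t\in\sigma$ for every $t$. Hence the edge $\{v_1,\dots,v_k\}$ lies inside the independent set $\sigma$, a contradiction.

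From here the argument is essentially bookkeeping. By the proposition and Theorem~\ref{t:col-to-frac}, $(X,{\cal C})_H$ has the fractional Helly property for $m$-tuples with a function $\beta$ depending only on $m$; then Theorems~\ref{t:gen-pq} and~\ref{t:frac-to-eps} give a $(p,q)$-theorem for all $p\ge q\ge m$, with a transversal bound $N(p,q)$ depending only on $p$, $q$ and $m$. Observe next that $\es{\{v_1\}}\cap\cdots\cap\es{\{v_j\}}=\es{\{v_1,\dots,v_j\}}\neq\emptyset$ precisely when $\{v_1,\dots,v_j\}$ is independent, so the intersecting subfamilies of $F$ are exactly the independent sets of $H$. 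Let $p=R_k\big(m,\omega(H)+1\big)$ be the $k$-uniform hypergraph Ramsey number; two-coloring the $k$-subsets of any $p$ vertices of $H$ by edge/non-edge, we must find either a clique of size $\omega(H)+1$ (impossible) or an independent set of size $m$, so $F$ has the $(p,m)$-property. By the $(p,q)$-theorem and Lemma~\ref{l:dictionary}, $\chi(H)=\tau(F)\le N(p,m)$, a quantity depending only on $\omega(H)$, $m$ and $k$ (and if $|V|<p$ then $\chi(H)\le|V|<p$ anyway), which is the desired $\chi$-bounding function.

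The main obstacle is the proposition: one has to see that a failure of colorful Helly in $(X,{\cal C})_H$ encodes, one color class at a time, an edge of $H$ ``smeared out'' over that class, and that the system of distinct representatives guaranteed by $T_k(m)$ then collides with the common point of any compatible colorful selection — a point which, in this space, is forced to be an independent set. Everything downstream is a routine invocation of the results assembled in Theorem~\ref{t:equivalences} together with Lemma~\ref{l:dictionary}.
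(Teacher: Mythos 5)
Your proposal is correct and follows essentially the same route as the paper: translate membership in ${\cal H}_k(m)$ into the bound $h_c\big((X,{\cal C})_H\big)\le m$ (Proposition \ref{p:color_bound}), then combine Theorem \ref{t:col-to-frac}, Corollary \ref{c:(p,q)}, the Ramsey number $R_k(m,\omega(H)+1)$, and Lemma \ref{l:dictionary} exactly as the paper does. The only (cosmetic) difference is in the key step: you argue directly with arbitrary convex sets written as $K=\es{\cap K}$ and extract an edge $e_i$ from each non-intersecting color class, whereas the paper first reduces to the generating family $\{\es{\{v\}} : v\in V\}$ and uses that its Helly number equals $k$; the underlying correspondence between colorful Helly failures and SDR-edges is identical.
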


For $k=2$, Theorem \ref{t:chi-bounded} asserts that the class of graphs with no induced matching of size $m$ is $\chi$-bounded. 
This is a special case of a well-known result of Gy{\'a}rf{\'a}s \cite[Theorem 2.1]{gyarfas} which states that the class $\cal{P}_\ell$ consisting of all graphs with no induced path of length $\ell$ is $\chi$-bounded. Since a path of length $\ell = 3m-1$ contains a matching of size $m$ it follows that $\cal{H}_2(m) \subset \cal{P}_{3m-1}$, and so the case $k=2$ of Theorem \ref{t:chi-bounded} is implied by Gy{\'a}rf{\'a}s' theorem. 

\smallskip

For $k>2$, the only proof of Theorem \ref{t:chi-bounded} that we are aware of uses the theory of convexity spaces. The crucial observation is the following. (We are grateful to Attila Jung for this observation.)

\begin{prop} \label{p:color_bound}
Let $H$ be a $k$-uniform hypergraph. Then $H \in \cal{H}_k(m)$ if and only if the colorful Helly number of $(X, {\cal C})_H$ is at most $m$.
\end{prop}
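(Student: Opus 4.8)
The plan is to unwind both sides of the claimed equivalence directly in terms of the definitions and then match them up. Recall that the colorful Helly number of $(X,{\cal C})_H$ is at most $m$ means: for any finite families $F_1,\dots,F_m\subset {\cal C}$ such that every colorful selection $K_1\cap\cdots\cap K_m\neq\emptyset$ ($K_i\in F_i$), at least one $F_i$ is intersecting. Since every member of ${\cal C}$ has the form $\es W$ and $\es W$ is inclusion-reversing in $W$, with $\es{A}\cap\es{B}=\es{A\cup B}$ and $\es W=\emptyset$ iff $W$ contains an edge (this is exactly the "no maximal independent set contains $W$" condition, so I should first record that $\es W=\emptyset \iff W$ contains an edge of $H$), the whole statement about ${\cal C}$ translates into a statement about subsets of $V$. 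First I would reduce to the case where each $F_i$ is a family of \emph{singletons} $\es{\{v\}}$: given arbitrary $F_i$, replace each $\es W\in F_i$ by the singletons $\{\es{\{v\}}: v\in W\}$; intersections of colorful selections only get larger, and "$F_i$ intersecting" is governed by $\cup_{W\in F_i} W$, so this reduction is harmless in both directions. After this reduction a colorful family $(F_1,\dots,F_m)$ is just an $m$-tuple of subsets $S_1,\dots,S_m\subseteq V$ (with $S_i=\{v: \es{\{v\}}\in F_i\}$).

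With that dictionary in place, I would spell out what the two sides say. The colorful-selection hypothesis "$\es{\{v_1\}}\cap\cdots\cap\es{\{v_m\}}\neq\emptyset$ for all $v_i\in S_i$" becomes "$\{v_1,\dots,v_m\}$ contains no edge of $H$, for every transversal $(v_1,\dots,v_m)\in S_1\times\cdots\times S_m$" — i.e., no SDR of $k$ of the sets $S_1,\dots,S_m$ forms an edge. The conclusion "some $F_i$ is intersecting" becomes "$\es{S_i}\neq\emptyset$ for some $i$", i.e.\ some $S_i$ is itself an independent set (contains no edge). So the colorful Helly number being $\le m$ is equivalent to: \emph{for any subsets $S_1,\dots,S_m\subseteq V$, if no $k$ of them admit an SDR that is an edge, then one of the $S_i$ contains no edge.} The contrapositive reads: if every $S_i$ contains an edge, then some $k$ of them admit an SDR forming an edge. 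Now I claim this is precisely property $T_k(m)$. For the "only if" direction (colorful Helly $\le m$ $\Rightarrow$ $T_k(m)$): given $m$ edges $e_1,\dots,e_m$, set $S_i=e_i$; each $S_i$ certainly contains an edge (itself), so the contrapositive yields $k$ of them with an SDR that is an edge — which is exactly the $T_k(m)$ conclusion. For the "if" direction ($T_k(m)$ $\Rightarrow$ colorful Helly $\le m$): given $S_1,\dots,S_m$ each containing an edge, pick an edge $e_i\subseteq S_i$ for each $i$; apply $T_k(m)$ to $e_1,\dots,e_m$ to get indices $i_1<\cdots<i_k$ and distinct $v_j\in e_{i_j}\subseteq S_{i_j}$ with $\{v_1,\dots,v_k\}\in E$; then extend $(v_1,\dots,v_k)$ to a full transversal of $S_1,\dots,S_m$ (choosing the remaining coordinates arbitrarily from the respective $S_i$), and this transversal spans a vertex set containing the edge $\{v_1,\dots,v_k\}$, contradicting the colorful-selection hypothesis — so not every $S_i$ can contain an edge, i.e.\ some $\es{S_i}\neq\emptyset$.

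The one genuinely delicate point — and the step I expect to be the main obstacle — is keeping the SDR bookkeeping honest across the singleton reduction and the "extend to a full transversal" step, since $T_k(m)$ allows the $m$ edges to be \emph{repeated} and only asks for $k$ of the $m$ indices, not $k$ distinct edges. I should check that when I reduce from general $F_i$ to singletons I do not accidentally change which index set $\{i_1,\dots,i_k\}$ is available, and that in the "if" direction the distinct vertices $v_1,\dots,v_k$ I extract genuinely come from $k$ \emph{distinct} coordinates $i_1,\dots,i_k$ (they do, by the statement of $T_k(m)$), so the partial transversal extends consistently. A second small thing to verify carefully is the degenerate bookkeeping: the definition of $T_k(m)$ presupposes $H$ has at least $m$ edges, and the definition of colorful Helly number as "the smallest $m$ for which the property holds" — I should make sure the proposition is read as "property $T_k(m)$ holds" $\iff$ "the colorful Helly property holds with parameter $m$", which is the cleanest formulation and sidesteps any minimality subtleties. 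Once these are nailed down, the proof is just the two-paragraph translation above; no real computation is involved, only a careful matching of quantifiers.
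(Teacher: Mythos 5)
Your proof is correct and follows essentially the same route as the paper's: reduce to families of the singleton-generated sets $\es{\{v\}}$, translate intersection and non-intersection into independence and edge-containment in $H$, and match the resulting transversal condition with the SDR condition of $T_k(m)$. The paper phrases your ``pick an edge inside each $S_i$'' step as an appeal to the Helly number of $(X,F)$ being $k$, but the content is identical.
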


\begin{proof}
Fix a $k$-uniform hypergraph $H = (V,E)$. We note that every convex set in 
$(X, {\cal C})_H$ is the intersection of some members of $F = \{\es{v} \: : \: v\in V \}$, and it follows that $h(X, {\cal C})_H = h(X, F)$ and $h_c(X, {\cal C})_H = h_c(X, F)$.  It therefore suffices to show that $H\in \cal{H}_k(m)$ if and only if $h_c(X,F) \leq m$. 

The colorful Helly number of $(X,F)$ is at most $m$ if and only if for any finite families $F_1, F_2, \dots, F_m \subset F$, where the intersection of the members of $F_i$ is empty,
there exists convex sets $K_{1} \in F_{1}, K_{2}\in F_{2}, \dots, K_{m}\in F_{m}$ such that $K_{1} \cap K_{2} \cap \cdots \cap K_{m} = \emptyset$.
Since the Helly number of $(X,F)$ equals $k$ we may assume that $|F_i| = k$, for every $i$, and consequently we get $F_i  = \{\es{v_1}, \es{v_2}, \dots, \es{v_k} \}$ where $e_i = \{v_1, v_2, \dots, v_k\}$ is an edge of $H$. There exists sets $K_i\in F_i$ such that $K_1 \cap K_2 \cap \cdots \cap K_m = \emptyset$ if and only if some $k$ of the $K_i$ have empty intersection (by the Helly number), and this in turn is  equivalent to the existence of an SDR for the edges $e_1, e_2, \dots, e_m$ which is an edge in $H$.  
\end{proof}

\begin{proof}[Proof of Theorem \ref{t:chi-bounded}]
Let $H$ be a $k$-uniform hypergraph in ${\cal H}_k(m)$. By Theorem \ref{t:col-to-frac} and Proposition \ref{p:color_bound} it follows that $h_f(X, {\cal C})\leq h_c(X, {\cal C}) \leq  m$, and by applying Theorem \ref{t:equivalences} and Corollary \ref{c:(p,q)}, it follows that $(X, {\cal C})_H$ admits a $(p,q)$ theorem for all $p\geq q \geq m$, where the bound on the transversal number depends only on $p$, $q$, and $m$.

Let $R_k(a, b)$ denote the hypergraph Ramsey number which guarantees that for any $k$-uniform hypergraph on $R$ vertices there exists an independent set of size $a$ or a clique of size $b$. 
For our given $H\in \cal{H}_k(m)$, let $p = R_k(m, \omega(H)+1)$. If $H$ has less than $p$ vertices, then $\chi(H) <  p/(k-1)$. Otherwise, every subset of $p$ vertices contains an independent set of size $m$, which means that the family $F = \{\es v : v\in V\} \subset \cal{C}$ has the $(p,m)$-property, and by Lemma \ref{l:dictionary} and Corollary \ref{c:(p,q)}  we have \[\chi(H) = \tau(F) \leq N, \] where $N$ is a constant depending only on $m$, $k$,  and $\omega(H)$.
\end{proof}

As a consequence of Theorem \ref{t:equivalences} and Proposition \ref{p:color_bound} it follows that Radon number of the associated convexity space $(X, {\cal C})_H$ is uniformly bounded in terms of $k$ and $m$, for every $H\in {\cal H}_k(m)$. Here we give a direct argument for bounding the Radon number which might be of independent interest. (This direct argument also leads to a better bound than the one obtained by following the equivalences of Theorem \ref{t:equivalences}.) 

\begin{prop}\label{p:radon_bound}
For every $H\in \cal{H}_k(m)$ we have  $r(X, {\cal C})_H \leq \binom{m}{k}\cdot 2^k$. 
\end{prop}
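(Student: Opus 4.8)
The plan is to unwind the definition of the Radon number of $(X,\cal C)_H$ directly in terms of the combinatorics of the hypergraph $H$, using the explicit description of convex hulls from Lemma \ref{l:convexhulls}. Recall that a point of $X$ is a maximal independent set $\sigma\subset V$, and by Lemma \ref{l:convexhulls} the convex hull of a subset $A\subset X$ is $\es{\cap A}$. So if $Y=\{\sigma_1,\dots,\sigma_n\}\subset X$ and $Y=P\cup Q$ is a partition, then $\conv P\cap\conv Q = \es{\cap P}\cap\es{\cap Q} = \es{(\cap P)\cup(\cap Q)}$, which is nonempty if and only if $(\cap P)\cup(\cap Q)$ is an independent set in $H$ (equivalently, is contained in some member of $X$). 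Thus $Y$ admits a Radon partition precisely when we can $2$-color the maximal independent sets $\sigma_1,\dots,\sigma_n$ into classes $P,Q$ so that $(\bigcap_{\sigma\in P}\sigma)\cup(\bigcap_{\sigma\in Q}\sigma)$ contains no edge of $H$. Bounding the Radon number therefore amounts to showing: if $n$ is large enough (specifically $n>\binom{m}{k}2^k$) then every $n$-element (multi)set $Y\subset X$ admits such a $2$-coloring.

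The key step is to exploit property $T_k(m)$ — or rather its contrapositive — to control which $k$-tuples of vertices can be "revived" by intersecting. Given $Y=\{\sigma_1,\dots,\sigma_n\}$, for each edge $e\in E$ that is contained in $\bigcup(\cap P)\cup(\cap Q)$ for some bad partition, $e$ must be split so that some vertices lie in $\cap P$ and the rest in $\cap Q$; in particular each such $v\in e$ lies in $\sigma_i$ for *all* $i$ in one of the two color classes. Here is where I would extract structure: for each $\sigma_i$ consider its complement behaviour on edges. The plan is to assign to each $\sigma_i$ a "type" recording, for each of the $\binom{m}{k}$-many relevant pieces of data, a $2^k$-ary choice, so that two points of the same type are interchangeable for the purposes of forming a Radon partition. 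Concretely, if $n>\binom{m}{k}2^k$ then by pigeonhole there are two indices $i\ne j$ with $\sigma_i,\sigma_j$ of the same type; one then shows that $Y=P\cup Q$ with $P=\{\sigma_i\}$ (or a small block) and $Q=$ the rest is a Radon partition — i.e. $(\cap P)\cup(\cap Q)$ is independent — because any edge threatening this would, via the repeated type, yield $m$ edges violating $T_k(m)$. More plausibly the bound $\binom{m}{k}\cdot 2^k$ is obtained as follows: if $|Y|>\binom{m}{k}2^k$, we want to find $P,Q$ with $\cap P$ and $\cap Q$ each avoiding the "dangerous" parts of every edge; one splits $Y$ into two halves $P,Q$ and argues that if both $\cap P\cup\cap Q$ contained an edge for every split, we could select from the $\le \binom{m}{k}$ "obstruction edges" and their $\le 2^k$ splitting patterns a collection of $m$ edges with no SDR that is an edge, contradicting $H\in\cal H_k(m)$.

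The main obstacle I anticipate is pinning down exactly the right notion of "type" so that (a) the number of types is at most $\binom{m}{k}\cdot 2^k$ and (b) a repeated type genuinely forces a Radon partition. The factor $2^k$ strongly suggests that for a fixed edge $e=\{v_1,\dots,v_k\}$ one records the subset $\{\,j : v_j\in\sigma_i\,\}\subseteq[k]$ (a choice among $2^k$ options), and the factor $\binom{m}{k}$ suggests one only needs to track this for $\binom mk$ carefully-chosen edges rather than all of $E$ — presumably the edges arising as SDRs among a minimal non-Radon configuration. The delicate point is the interface with property $T_k(m)$: one must argue that a set of size exceeding $\binom{m}{k}2^k$ with no Radon partition produces, by choosing one point per (edge, pattern) pair plus one extra repeated point, a list of $m$ edges whose every $k$-subset fails to have an edge-SDR — and getting the counting to land at exactly $m$, not $m+1$ or $2m$, will require care. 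Once the type function and this extraction are set up correctly, the pigeonhole step and the verification that a repeated type yields $\conv P\cap\conv Q\ne\emptyset$ via $\es{(\cap P)\cup(\cap Q)}\ne\emptyset$ should be routine.
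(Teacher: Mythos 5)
Your opening reduction is correct and matches the paper: a partition $Y=P\cup Q$ is a Radon partition iff $\es{(\cap P)\cup(\cap Q)}\neq\emptyset$, i.e.\ iff $(\cap P)\cup(\cap Q)$ is independent, so a set with no Radon partition yields an edge inside $(\cap P)\cup(\cap Q)$ for \emph{every} bipartition. But from that point on there is a genuine gap: neither of your two candidate strategies supplies the mechanism that actually produces $m$ edges \emph{all} of whose $k$-subsets fail the SDR condition. The paper's key ingredient, which is absent from your plan, is a purely combinatorial construction (its Lemma \ref{l:set_pairs}): a family of $m$ bipartitions $(A_i,B_i)$ of the index set $[N]$, $N=\binom{m}{k}2^k$, that is ``$k$-qualitatively independent'' --- for every $k$ of the bipartitions and every choice of one side from each, the chosen sides have a common element $t\in[N]$. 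Fixing a non-Radon set $\{\sigma_1,\dots,\sigma_N\}$, each bipartition $(A_i,B_i)$ gives an edge $e_i\subset Y_i\cup Z_i$ with $Y_i=\bigcap_{j\in A_i}\sigma_j$, $Z_i=\bigcap_{j\in B_i}\sigma_j$; then any system of distinct representatives $v_\ell\in e_{j_\ell}$ has each $v_\ell$ lying in one side's intersection, the common index $t$ puts all $k$ representatives inside the single independent set $\sigma_t$, and so no SDR is an edge. The bound $\binom{m}{k}\cdot 2^k$ is exactly the cost of building this family (iterating a $2^k$-element gadget over all $\binom{m}{k}$ $k$-tuples), not the count of ``types'' of points or of (edge, splitting-pattern) pairs.

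Your first strategy (pigeonhole on types of the $\sigma_i$, then take $P$ to be a single repeated point or small block) does not work as stated: two points of the same type let you interchange them, but give no way to manufacture $m$ distinct edges violating $T_k(m)$, and a singleton $P=\{\sigma_i\}$ gives $\cap P=\sigma_i$, which is already maximal, so $\sigma_i\cup(\cap Q)$ being independent would force $\cap Q\subset\sigma_i$ --- a condition you have no control over. Your second strategy is closer in spirit (the arithmetic $\binom{m}{k}\times 2^k$ is pointing at the right object), but ``splitting $Y$ into two halves'' considers bipartitions one at a time, whereas the proof needs $m$ bipartitions chosen \emph{simultaneously and in advance} so that their $2^m$-many side-choices interact correctly on every $k$-tuple. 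Without that construction the extraction of the $m$ offending edges, which you yourself flag as the delicate point, does not go through.
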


The proof of Proposition \ref{p:radon_bound} requires the following construction.

\begin{lemma}\label{l:set_pairs} Let $m, k, N$ be integers satifying $m\geq k \geq 2$ and $N \geq \binom{m}{k}\cdot 2^k$.
There exists a family of pairs of sets $\big\{(A_i,B_i) \big\}_{i=1}^m$ satisfying the following properties:
\begin{enumerate}[(i)]
\item  \label{pr:partition} $A_i \cup B_i$ is a partition of $[N]$ for every $1\leq i \leq m$. That is, $A_i$ and $B_i$ are both nonempty, disjoint, and their union equals $[N]$. 
\smallskip

\item \label{pr:transversal} For every $1\leq i_1 < i_2 < \cdots < i_k \leq n$ and every choice $W_1\in \{A_{i_1}, B_{i_1}\}$, $W_2\in \{A_{i_2}, B_{i_2}\}$, $\dots$, $W_k\in \{A_{i_k}, B_{i_k}\}$ we have
\[W_1\cap W_2 \cap \cdots \cap W_k \neq \emptyset.\]
\end{enumerate}
\end{lemma}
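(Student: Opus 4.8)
The plan is to build the pairs $(A_i, B_i)$ by encoding each ground-set element of $[N]$ as a carefully chosen "local pattern," and then to verify property \eqref{pr:transversal} by a pigeonhole argument on which $k$-subsets of indices appear. More precisely, I would think of a pair $(A_i,B_i)$ as a function $\chi_i : [N] \to \{A,B\}$, where $n \in A_i$ iff $\chi_i(n) = A$. Property \eqref{pr:transversal} then says: for every $k$-subset $I = \{i_1 < \dots < i_k\}$ of $[m]$ and every target pattern $\phi : I \to \{A,B\}$, there is some $n \in [N]$ with $\chi_{i_j}(n) = \phi(i_j)$ for all $j$. So the job is to realize, on the $N$ coordinates, every one of the $\binom{m}{k}\cdot 2^k$ "requests" (a choice of $k$-subset together with a sign pattern on it). The natural construction is to use exactly one coordinate per request: index the coordinates of $[N]$ by pairs $(I, \phi)$ where $I \in \binom{[m]}{k}$ and $\phi \in \{A,B\}^I$, and for the coordinate $(I,\phi)$ set $\chi_i\big((I,\phi)\big) = \phi(i)$ if $i \in I$, and $\chi_i\big((I,\phi)\big) = A$ (say) if $i \notin I$. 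Since there are $\binom{m}{k}\cdot 2^k$ such pairs and $N \geq \binom{m}{k}\cdot 2^k$, we can use these as (some of) the coordinates and fill the remaining $N - \binom{m}{k}\cdot 2^k$ coordinates arbitrarily, e.g. all equal to $A$ on all $\chi_i$ — with one caveat addressed below.

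With this construction, property \eqref{pr:transversal} is immediate: given $I$ and a choice $W_j \in \{A_{i_j}, B_{i_j}\}$, read off the corresponding pattern $\phi$, and the coordinate $(I,\phi)$ lies in $W_1 \cap \dots \cap W_k$ by construction. The remaining issue is property \eqref{pr:partition}, i.e. that each $A_i$ and each $B_i$ is nonempty. For a fixed $i$, the coordinate $(I,\phi)$ with $i \in I$ and $\phi(i) = B$ lies in $B_i$, so $B_i \neq \emptyset$ as long as $m \geq k$ (so some $k$-subset contains $i$); similarly taking $\phi(i) = A$ shows $A_i \neq \emptyset$. So both parts are nonempty already among the $\binom{m}{k}\cdot 2^k$ designated coordinates, and the padding coordinates do not break this. (If one prefers to avoid favoring $A$ on the padding and "out-of-$I$" coordinates, one may instead extend each request pattern to a full function $[m] \to \{A,B\}$ arbitrarily; this changes nothing in the argument.)

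I do not expect a serious obstacle here — the statement is essentially a repackaging of "the $2^k$-fold product of a $\binom{m}{k}$-element set surjects appropriately," and the counting $N \geq \binom{m}{k}\cdot 2^k$ is exactly what is needed for the one-coordinate-per-request encoding to fit. The only point requiring a moment's care is bookkeeping: making sure that the pattern $\phi$ ranges over \emph{all} of $\{A,B\}^I$ (both parts, including the all-$A$ and all-$B$ patterns) so that \eqref{pr:transversal} covers every choice of the $W_j$, and simultaneously checking that this forces nonemptiness of every $A_i$ and $B_i$. Both follow from $m \geq k$, so no extra hypothesis is needed beyond what is stated.
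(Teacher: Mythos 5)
Your proof is correct and is essentially the same construction as the paper's: one ground-set element dedicated to each pair (a $k$-subset of indices together with a sign pattern on it), which is exactly where the bound $\binom{m}{k}\cdot 2^k$ comes from. The paper organizes this as a base case $m=k$ (where the $2^k$ patterns are encoded as the subsets $T_i \subset [k]$) followed by an iterative extension over all $k$-tuples of $[m]$, whereas your direct indexing by pairs $(I,\phi)$ is a cleaner one-shot version of the same idea, and unlike the paper's sketch you also explicitly verify nonemptiness of each $A_i$ and $B_i$.
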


\begin{proof}
We first give a construction for $m=k$.   Let $T_1, T_2, \dots, T_{2^k}$ be the distinct subsets of $[k]$. For $1\leq i \leq 2^k$ and $1\leq j \leq k$ define the sets $A_j$ by the rule
\begin{equation}\label{eq:partition_rule}
i\in A_j \iff j\in T_i,    
\end{equation}
and set $B_j = [2^k]\setminus A_j$. Observe that $|A_i| = |B_i| = 2^{k-1}$ and that   $A_i\cup B_i$ is a partition of $[2^k]$ for all $1\leq i\leq 2^k$. It remains to verify property \eqref{pr:transversal}. For a given choice $W_1, W_2, \dots, W_k$, where $W_i \in \{A_i, B_i\}$, consider the set 
\[X = \{j : W_j = A_j\} \subset [k].\]
Then $X = T_i$ for some $i$ and by \eqref{eq:partition_rule} we have $i\in W_1 \cap W_2 \cap \cdots \cap W_k$.

\smallskip

Let us illustrate the construction with an example for $m=k=3$. If we set 
\[
\begin{array}{cccc}
T_1 = \emptyset, &
T_2 = \{1\},&
T_3 = \{1,2\},&
T_4 = \{1,3\}, \\
T_5 = \{1,2,3\}, &
T_6 = \{2\}, &
T_7 = \{2,3\}, &
T_8 = \{3\}, 
\end{array}
\]
then rule \eqref{eq:partition_rule} gives us the family of partitions
\[
\begin{array}{ccc}
A_1 = \{2, 3, 4, 5\} & , &B_1 = \{1, 6, 7, 8\}\\
A_2 = \{3, 5, 6, 7\} & , &B_2 = \{1, 2, 4, 8\}\\
A_3 = \{4, 5, 7, 8\} & , &B_3 = \{1, 2, 3, 6\}.
\end{array}\]

\smallskip

Here is a sketch for  general  $m>k$. First we note that the family of partitions of $[2^k]$ constructed above can be extended to a family of partitions of a {\em larger} set by simply adding additional elements to either $A_i$ or $B_i$ for every $1\leq i \leq k$. This works since the extended family of partitions retains property \eqref{pr:transversal}. So to construct a family of partitions $\big\{ (A_i, B_i)\big\}_{i=1}^m$ we can iterate the construction above over all  $k$-tuples of $[m]$. If a given $k$-tuple does not satisfy property \eqref{pr:transversal}, we add $2^k$ additional elements to the ground set and distribute them among the $(A_i,B_i)$ of the given $k$-tuple according to rule \eqref{eq:partition_rule}, after which we  distribute these additional elements arbitrarily to the other $(A_j, B_j)$ not in the $k$-tuple. Repeating this process at most $\binom{m}{k}$ times assures that each $k$-tuple of partitions satisfies property \eqref{pr:transversal}. Finally, if necessary, we can include additional dummy elements to obtain partitions of $[N]$, which completes the construction. 
\end{proof}

\begin{proof}[Proof of Lemma \ref{p:radon_bound}]
Set $N = \binom{m}{k} \cdot 2^k$. Let $H$ be a $k$-uniform hypergraph and assume the Radon number $r(X, {\cal C})_H  >  N$. Our goal is to show that $H\notin {\cal H}_k(m)$, that is, $H$ contains $m$ edges that violate Property $T_k(m)$. 

\medskip

Let $\big\{(A_i,B_i)\big\}_{i=1}^m$ be a family of partitions of $[N]$ which satisfies the properties of Lemma \ref{l:set_pairs}.
By the assumption that $r(X, {\cal C})_H > N$, 
there exists a subset $\{ \sigma_1, \sigma_2, \dots, \sigma_N \} \subset X$ 
which does not admit a Radon partition. 
For each $i\in [m]$ define the sets
\[Y_i = \textstyle \bigcap_{j\in A_i} \sigma_j \;\; \text{ and } \; \; Z_i = \bigcap_{j\in B_i} \sigma_j.\]
Observe that $Y_i$ and $Z_i$ are independent sets in $H$, so by Lemma \ref{l:convexhulls} we have 
\[\conv {\{\sigma_j\}_{j\in A_i}} = \es{Y_i} \;\; \text{ and } \;\; \conv {\{\sigma_j\}_{j\in B_i}} = \es{Z_i}, \]
and since $\{ \sigma_i \}_{i=1}^N$ has no Radon partition we have $S_{Y_i}\cap S_{Z_i} = \emptyset$, which means that $Y_i\cup Z_i$ is not independent in $H$.  Consequently, we obtain a sequence of edges $(e_1, e_2, \dots, e_m)$ in $H$  where $e_i\subset Y_i \cup Z_i$ for every $i\in [m]$.





\smallskip

To complete the proof we show that these $m$ edges violate property $T_k(m)$. To this end, 
consider an arbitrary set of $k$ distinct vertices \[v_1\in e_{j_1}, v_2\in e_{j_2}, \dots, v_k\in e_{j_k},\] for some $k$-tuple of distinct indices $\{j_1, j_2, \dots, j_k\} \subset \{i_1, i_2, \dots, i_m\}$. Recall that $e_{j_\ell} \subset Y_{j_\ell}\cup Z_{j_\ell}$ for every $1\leq \ell\leq k$ and therefore \[v_1\in R_{j_1}, v_2 \in R_{j_2}, \dots, v_k \in R_{j_k}\] for some choice 
$R_{j_1}\in \{Y_{j_1},  Z_{j_1}\}, R_{j_2}\in \{Y_{j_2},  Z_{j_2}\}, \dots, 
R_{j_k}\in \{Y_{j_1},  Z_{j_k}\}$. Equivalently, for every $1\leq \ell \leq k$ there is a choice $W_{j_\ell} \in \{ A_{j_\ell}, B_{j_\ell}\}$ such that \[\textstyle v_\ell \in \bigcap_{t\in W_{j_\ell}} \sigma_t.\]
By Property \eqref{pr:transversal} of Lemma \ref{l:set_pairs}, there is an integer $t\in W_{j_1}\cap W_{j_2} \cap \cdots \cap W_{j_k}$, which means that $v_j\in \sigma_t$ for every $1\leq j \leq k$. Therefore $\{v_1, v_2, \dots, v_k\}$ is not an edge in $H$,  since $\sigma_t$ is a (maximal) independent set in $H$. This shows that $H$ does not satisfy Property $T_k(m)$ and completes the proof.
\end{proof}

\begin{remark}\label{r:hkm-bound}
While the upper bound on the Radon number given by Proposition \ref{p:radon_bound} is rather crude, it is not hard to see that it must grow as function of $m$ and $k$. For instance, if $H$ is the $k$-uniform hypergraph consisting of $m = 2^r-1$ pairwise disjoint edges (i.e. a matching of size $m$), then the Radon number $r(X, \cal C)_H \geq r+k$. (We leave the proof to the reader.)
\end{remark}

\section{Concluding remarks} 

\noindent $\diamond$ \hspace{1ex}
In section 2 we saw that the various Helly type invariants for a convexity space can be bounded in terms of its Radon number. It would be highly interesting to find the optimal dependencies. 

\medskip
\noindent $\diamond$ \hspace{1ex}
One aspect which we (purposely) omitted from the discussion in section 2 is the theory of {\em nerve complexes} and the {\em Leray number}. See for instance \cite{tancer} for a survey on nerve complexes of families of convex sets in $\mathbb{R}^d$. Let $F = \{A_1, \dots, A_n\}$ be a family of subsets of a ground set $X$. The nerve of $F$ is the simplicial complex 
\[N(F) = \{ \sigma \subset [n] : \textstyle \bigcap_{i\in \sigma} A_i \neq \emptyset\}.\]
Much of the theory of intersection patterns of convex sets in $\mathbb{R}^d$ has been studied via nerve complexes. An important parameter in this context is the Leray number. A simplicial complex is $d$-Leray if $\tilde{H}_i(L, \mathbb{Q}) = 0$ for all $i\geq d$ and all induced subcomplexes $L\subset K$. The Leray number $\ell(K)$ is the smallest integer $d$ for which $K$ is $d$-Leray. It is a basic fact due to Wegner that the nerve complex of a finite family of convex sets in $\mathbb{R}^d$ is $d$-Leray, but the class of $d$-Leray complexes is much more general. As a matter of fact,  much of the known Helly-type theorems for families of convex sets in $\mathbb{R}^d$ have been extended to $d$-Leray complexes, see e.g. \cite{akmm, kalai-upper, top-col-hel}. The following natural question arises.

\smallskip

\begin{question*}
Does there exist a function $d   : \mathbb{N} \to \mathbb{N}$ with the following property? For any finite family $F\subset \cal C$ of convex sets in a convexity space $(X, \cal C)$ with Radon number at most $r$, the nerve $N(F)$ has Leray number at most $d(r)$.
\end{question*}

\smallskip

An affirmative answer to this question would imply bounds on the fractional and colorful Helly numbers, as well as a $(p,q)$-theorem. It turns out, however, that the answer is {\em negative}. This follows from constructions of Januszkiewicz and {\'S}wi{\k a}tkowski \cite{janus} (see also Osajda \cite{osajda}), who showed that for every positive integer $n$ there exists a graph $G_n$, with no induced $C_4$-subgraph, such that the clique complex $K(G_n)$ has nonvanishing $n$-dimensional rational homology. To see how this relates to convexity spaces we consider the complement graph $\overline{G_n}$ which has no induced matching of size 2. By Propositions \ref{p:color_bound} and \ref{p:radon_bound}, the associated convexity space of $\overline{G_n}$ has colorful Helly number 2 and Radon number at most 4. Recall that for a subset of vertices $v_1, \dots, v_k$ in $\overline{G_n}$, we have $\es{v_1}\cap \es{v_2}\cap \cdots \cap \es{v_k} \neq \emptyset$ if and only if $\{v_1, v_2, \dots, v_k\}$ is an independent set in $\overline{G_n}$, or equivalently a clique in $G_n$. Therefore the nerve complex of $F = \{\es{v} : v\in V(\overline{G_n})\}$ is isomorphic to the clique complex of $G_n$, which implies that it is impossible to bound the Leray number as a function of the Radon number.

\medskip
\noindent $\diamond$ \hspace{1ex}
It is likely that the bounds on the Helly type invariants could be improved a lot in the case of separable convexity spaces (discussed in connection with Theorem \ref{t:VC-to-eps}). For instance, in \cite{zuz} it is shown that if a separable convexity spaces has a bounded Radon number then the fractional Helly number is bounded by the {\em dual VC-dimension} of the system of halfspaces. Can we improve the bound on the Tverberg numbers for separable convexity spaces? 

\medskip
\noindent $\diamond$ \hspace{1ex} Can we find sufficient conditions that guarantee that a set system $(X, \cal S)$ has the fractional Helly property? One such condition is that the Radon number $r(X, \cal S^\cap)$ is bounded, but what can be said when $r(X, \cal S^\cap)$ is unbounded?   
\smallskip

A prototypical example comes from geometric transversal theory. Let $(\mathbb{R}^d, \cal C)$ denote the standard convexity on $\mathbb{R}^d$, and for $K\in \cal C$ let $K^*$ denote the set of affine hyperplanes that intersect $K$. If we define 
\[\cal S = \{K^* : K\in \cal C\}, \] we obtain a set system $(\mathbb{G}^d, \cal S)$ where $\mathbb{G}^d$ denotes the ``affine Grassmannian'' whose points represent affine hyperplanes in $\mathbb{R}^d$. This is an example of the convexity spaces explored in \cite{goodman}. It turns out that the Helly number $h(\mathbb{G}^d, \cal S)$ is unbounded, and consequently the Radon number $r(\mathbb{G}^d, \cal S^\cap)$ is unbounded as well. However, it was shown in \cite{hyperplanes} that the set system $(\mathbb{G}^d, \cal S)$ has the fractional Helly property for $(d+1)$-tuples.

\medskip
\noindent $\diamond$ \hspace{1ex} 
Are there other interesting classes of uniform hypergraphs that are $\chi$-bounded? 
Here is one example. 
Recall that  a set of $m$ edges $\{e_1, e_2, \dots, e_m\}$ in $H$ is a {\em sunflower} if there is a set $S$ such that  $e_i\cap e_j = S$ for all $i \neq j$.  
We say that a $k$-uniform hypergraph $H$ has property $\delta_k(m)$ if any sunflower of size $m$ in $H$ contains $k$ edges that have a system of distinct representatives which is an edge in $H$. 
In other words, the SDR condition in property $T_k(m)$ is only invoked on $m$-tuples of edges that form sunflowers. For integers $m\geq k \geq 2$  let  $\Delta_k(m)$ denote the class of all $k$-uniform hypergraphs that satisfy property $\delta_k(m)$.
It is straight-forward to show, using the sunflower lemma \cite{sunflower}, that $\Delta_k(m) \subset \cal{H}_k(t)$ for some $t = t(k,m)$, and so  Theorem \ref{t:chi-bounded} therefore implies that the class $\Delta_k(m)$ is $\chi$-bounded for every $m\geq k \geq 2$.

\medskip
\noindent $\diamond$ \hspace{1ex} 
Here we suggest a further generalization of the hypergraph properties discussed earlier. We say that a $k$-uniform hypergraph $H$ has property  $D_k(m)$ if for any set of {\em pairwise disjoint} edges $\{e_1, e_2, \dots, e_m\}$ there exists an edge $e\subset e_1\cup e_2 \cup\cdots \cup e_m$ and such that $|e\cap e_i|\leq 1$ for every $i$. In other words, the SDR condition from the $T_k(m)$ property is only invoked on $m$-tuples of edges that are pairwise disjoint. For integers $m\geq k \geq 2$, let $\cal D_k(m)$ denote the class of hypergraphs that satisfy property $D_k(m)$. We conjecture that the class $\cal D_k(m)$ is $\chi$-bounded for all $m\geq k \geq 2$. Note that the class ${\cal D}_k(m)$ is not contained in ${\cal H}_k(t)$ for any $t$, since the class ${\cal D}_k(m)$ contains all sunflowers with nonempty kernels. (See the previous remark.) Therefore Proposition \ref{p:color_bound} implies that there is no uniform bound on the colorful Helly number (nor the Radon number) for the associated convexity spaces of hypergraphs in ${\cal D}_k(m)$.

\end{document}